\author{\sf{David Gamarnik} \thanks {Operations Research Center and Sloan School of Management, MIT, Cambridge, MA, 02139, e-mail: \tt{gamarnik@mit.edu} } \and 
\sf{Sidhant Misra} \thanks{Department of Electrical Engineering and Computer Science, MIT, Cambridge, MA, 02139, e-mail: \tt{sidhant@mit.edu}} }
\newtheorem{thm}{Theorem}
\newtheorem{lem}{Lemma}
\newtheorem{cor}{Corollary}
\theoremstyle{definition}
\newtheorem{defin}{Definition}
\newtheorem{assumption}{Assumption}
\begin{document}

\title{Giant Component in Random Multipartite Graphs with Given Degree Sequences}
\maketitle

\abstract
We study the problem of the existence of a giant component in a random multipartite graph. We consider a random multipartite graph with $p$ parts generated according to a given degree sequence $n_i^{\mathbf{d}}(n)$
which denotes the number of vertices in part $i$ of the multipartite graph with degree given by the vector $\mathbf{d}$. We assume that the empirical distribution of the 
degree sequence converges to a limiting probability distribution. Under certain mild
 regularity assumptions, we characterize the conditions
under which, with high probability, there exists a component of linear size. The characterization involves checking whether the Perron-Frobenius norm of the
matrix of means of a certain associated edge-biased distribution is greater than unity. We also specify the size of the giant component when it exists. 
We use the exploration process of Molloy and Reed to analyze the size of components in the random graph. The main challenges arise due to the multidimensionality of the random processes involved which prevents us
from directly applying the techniques from the standard unipartite case. 
In this paper we use techniques from the theory of multidimensional Galton-Watson processes along with Lyapunov function technique to overcome the challenges.
\begin{section}{Introduction}

	The problem of the existence of a giant component in random graphs was first studied by Erd\"os and R\'enyi. In their classical paper \cite{ErdosRenyi}, they considered a random graph model on $n$ and $m$ edges where each such possible graph is equally likely. They showed that if $m/n > \frac{1}{2} + \epsilon$, with high probability as $n \rightarrow \infty$ there exists a component of size linear in $n$ 
	in the random graph and that the size of this component as a fraction of $n$ converges to a given constant.
	
	 The degree distribution of the classical  Erd\"os-R\'enyi random graph has Poisson tails. However in many applications the degree distribution associated with an underlying graph does not satisfy this. For example,
	 many so-called ``scale-free" networks exhibit power law distribution of degrees. This motivated the study of random graphs generated according to a given degree sequence. The giant component problem
	on a random graph generated according to a given degree sequence was considered by Molloy and Reed \cite{MolloyReed1}. They provided conditions on the degree distribution under which a giant component 
	exists with high probability. Further in \cite{MolloyReed2}, they also showed that the size of the giant component as a fraction of the number of vertices converges in probability to a given positive constant.
	They used an exploration process to analyze the components of vertices of the random graph to prove their results. Similar results were established by Janson and Luczak in \cite{JansonLuczak} using 
	different techniques based on the convergence of empirical distributions of independent random variables. There have been several papers that have proved similar results with similar but different assumptions and 
	tighter error bounds \cite{Molloy12}, \cite{BollobasRiordan12}, \cite{Riordan12}.
	Results for the critical phase for random graphs with given degree sequences were derived by Kang and Seierstad in \cite{KangSeierstad08}.
	  All of these results consider a random graph on $n$ vertices with a given degree sequence where the distribution is uniform among all
	feasible graphs with the given degree sequence. The degree sequence is then assumed to converge to a probability distribution and the results provide conditions on this probability distribution for which a 
	giant component exists with high probability. 
	
	In this paper, we consider random \emph{multipartite} graphs  with $p$ parts with given degree distributions. Here $p$ is a fixed positive integer.
	Each vertex is associated with a degree vector $\mathbf{d}$, where each of its component $d_i, i \in [p]$
	dictates the number of 
	neighbors of the vertex in the corresponding part $i$ of the graph. As in previous papers, we assume that the empirical distribution associated with the number of vertices of degree $\mathbf{d}$ converges
	to a probability distribution. We then pose the problem of finding conditions under which there exists a giant component in the random graph with high probability. 
	Our approach is based on the analysis of the Molloy and Reed exploration process. The major bottleneck is that the exploration process is a multidimensional process and the techniques of Molloy and Reed
	of directly underestimating the exploration process by a one dimensional random walk does not apply to our case. 
	In order to overcome this difficultly, we construct a linear Lyapunov function based on the Perron-Frobenius theorem, a technique often used in the study of multidimensional branching processes. 
	Then we carefully couple the exploration process with some underestimating process to prove our results The coupling construction is also more involved due to the multidimensionality of the process. 
	This is because in contrast to the unipartite case, there are multiple types of clones (or half-edges) involved in the exploration process, corresponding to which pair of parts of the multipartite graph they belong to. 
	At every step of the exploration process, revealing the neighbor of such a clone leads to the addition 
	of clones of \emph{several} types to the component being currently explored. The particular numbers and types
	of these newly added clones is also dependent on the kind of clone whose neighbor was revealed.  So, the underestimating  process needs to be constructed in a way such that it simultaneously underestimates
	the exploration process for each possible type of clone involved. We do this by choosing the parameters of the underestimating process such that for each type of clone, the vector of additional clones which are added by
	revealing its neighbor is always component wise smaller than the same vector for the exploration process.
	
	All results regarding giant components typically use a configuration model corresponding to the given 
	degree distribution by splitting vertices into clones and performing a uniform matching of the clones. In the standard unipartite case, at every step of the exploration process all available clones can be treated same.
	However in the multipartite case, this is not the case. For example,
	 the neighbor of a vertex in part $1$ of the graph with degree $\mathbf{d}$ can lie in part $j$ only if $d_j > 0$. Further, this neighbor must also have a degree
	$\hat{\mathbf{d}}$ such that $\hat{d}_i > 0$. This poses the issue of the graph breaking down into parts with some of the $p$ parts of the graph getting disconnected from the others. To get past this we make a certain
	irreducibility assumption which we will carefully state later. This assumption not only addresses the above problem, but also enables us to construct linear Lyapunov functions by using the Perron-Frobenius theorem
	for irreducible non-negative matrices. 
	We also prove that with the irreducibility assumption,  the giant component when it exists is unique and has linearly many vertices in each of the $p$ parts of the graph.
	In \cite{BollobasRiordan12}, Bollobas and Riordan show that the existence and the size of the giant component in the \emph{unipartite} case 
	is closely associated with an \textit{edge-biased} branching process. In this paper, we also construct
	an analogous edge-biased branching process which is now a multi-type branching process, and prove similar results.
	
	Our study of random multipartite graphs is motivated by the fact that several real world networks naturally demonstrate a multipartite nature.
	The author-paper network, actor-movie network, the network of company ownership,
	the financial contagion model, heterogenous social networks, etc. are all multipartite \cite{Newmann99}, \cite{Boss04}, \cite{Jackson08}.
	 Examples of biological networks which exhibit multipartite structure include drug target networks, protein-protein interaction networks and human disease networks \cite{Goh07}, \cite{Yildrim07}, \cite{Morrison06}.
	 In many cases evidence suggests that explicitly modeling the multipartite structure results in more accurate models and predictions.
	 
	Random bipartite graphs ($p=2$) with given degree distributions were considered by Newmann et. al in \cite{Newmann01}. They used generating function heuristics to identify the critical point in the bipartite case. However, 
	they did not provide rigorous proofs of the result. Our result establishes a rigorous proof of this result and we show that in the special case $p=2$, the conditions we derive is equivalent to theirs.
	
	The rest of the paper is structured as follows.  In Section \ref{sec:def}, we start by introducing the basic definitions and the notion of a degree distribution for multipartite graphs. In Section \ref{sec:results}, we formally state
	our main results. Section \ref{sec:configuration} is devoted to the description of the configuration model.
	In Section \ref{sec:exploration}, we describe the exploration process of Molloy and Reed and the associated distributions that govern the evolution of this 
	  process. In Section \ref{sec:existence} and Section \ref{sec:size}, we prove our main results for the supercritical case, namely when a giant component exists with high probability. In Section \ref{sec:subcritical} 
	  we prove a sublinear upper bound on the size of the largest component in the subcritical case.
	 \end{section}

\begin{section}{Definitions and preliminary concepts} \label{sec:def}
	We consider a finite simple undirected graph $\mathcal{G}  = (\mathcal{V}, \mathcal{E} )$ where $\mathcal{V}$ is the set of vertices  and $\mathcal{E}$ is the set of edges.
	We use the words ``vertices" and ``nodes" interchangeably. A \emph{path} between two vertices
	$v_1$ and $v_2$ in $\mathcal{V}$ is a collection of vertices $v_1 = u_1, u_2, \ldots, u_l = v_2$ in $\mathcal{V}$ such that for each $i = 1,2, \ldots, l-1$ we have $(u_i, u_{i+1}) \in \mathcal{E}$.
	A component, or more specifically a connected component of a graph $\mathcal{G}$ is a subgraph $\mathcal{C} \subseteq \mathcal{G}$ such that there is a path between any two vertices in $\mathcal{C}$.
	 A family of random graphs $\{ \mathcal{G}_n\}$ on $n$ vertices is said to have a giant component if there exists a positive constant $\epsilon > 0$ such that 
	 $\mathbf{P}(\mbox{There exists a component } \mathcal{C} \subseteq \mathcal{G}_n \mbox{ for which } \frac{|\mathcal{C}|}{n} \geq \epsilon) \rightarrow 1$. Subsequently, when a property holds with probability converging
	 to one as $n \rightarrow \infty$, we will say that the property hold with high probability or w.h.p. for short. 
	 
	 For any integer $p$, we use $[p]$ to denote the set $\{ 1,2, \ldots, p\}$. For any matrix $M \in \mathbbm{R}^{m \times n}$, we denote by $\|M\| \triangleq \max_{i,j} |M_{ij}|$, the largest element of the matrix $M$
	 in absolute value. It is easy to check that $\| \cdot \|$ is a valid matrix norm. We use $\delta_{ij}$ to denote the Kronecker delta function defined by 
	 \begin{align*}
	 	\delta_{ij} = \begin{cases} 1, \ \mbox{if } i = j, \\
		0, \ \mbox{otherwise}.
		\end{cases}
	 \end{align*}
	 We denote by $\mathbf{1}$ the all ones vector whose dimension will be clear from context.
	
	The notion of an asymptotic degree distribution was introduced by Molloy and Reed \cite{MolloyReed1}. In the standard unipartite case, a degree distribution dictates the fraction of vertices of a given degree. In this section we introduce an analogous notion of an asymptotic degree distribution for random multipartite graphs.
	We consider a random multipartite graph $\mathcal{G}$ on $n$ vertices with $p$ parts denoted by ${G}_1, \ldots, {G}_p$. For any $i \in [p]$ a vertex $v \in G_i$ is associated with a ``type"
	$\mathbf{d} \in \mathbbm{Z}_+^p$ which we call the ``type" of $v$. This means for each $i=1,2, \ldots , p$, the node with type $\mathbf{d}$
	has $d(i) \triangleq d_i$ neighbors in ${G}_i$. A degree distribution describes the fraction of vertices  of type $\mathbf{d}$ in $G_i, \ i \in [p]$.
	 We now define an \textit{asymptotic degree distribution} as a sequence of degree distributions which prescribe the number of vertices of type $\mathbf{d}$ in a multipartite graph on $n$ vertices.
	 For a fixed $n$, let $\mathcal{D}(n) \triangleq \left( n_{i}^{\mathbf{d}}(n), \ i \in [p], \mathbf{d} \in \{ 0,1, \ldots, n\}^p \right)$,
	  where $n_{i}^{\mathbf{d}}(n)$ denotes the number of vertices in $G_i$ of type $\mathbf{d}$. Associated with each $\mathcal{D}(n)$ is a probability 
	 distribution $\mathbf{p}(n) = \left( \frac{n_{i}^{\mathbf{d}}(n)}{n}, \ i \in [p], \mathbf{d} \in \{ 0,1, \ldots, n\}^p \right)$ which denotes the fraction of vertices of each type in each part. Accordingly, we write
	 $p_i^{\mathbf{d}}(n) = \frac{n_{i}^{\mathbf{d}}(n)}{n}$. For any vector degree $\mathbf{d}$ the quantity $\mathbf{1}' \mathbf{d}$ is simply the total degree of the vertex. We define the quantity
	 \begin{align}
	  	\omega(n) \triangleq \max \{ \mathbf{1}'\mathbf{d} : n_i^{\mathbf{d}}(n) > 0 \mbox{  for some  } i \in [p]\}, \label{maxdegreebound}
	  \end{align}
	  which is the maximum degree associated with the degree distribution $\mathcal{D}(n)$.
	  To prove our main results, we need additional assumptions on the degree sequence.
	 \begin{assumption} \label{assume}
	 	The degree sequence ${\{\mathcal{D}(n)\}}_{n \in \mathbbm{N}}$ satisfies the following conditions:
	 \begin{itemize}
	 
	 	\item[(a)] For each $n \in \mathbbm{N}$ there exists a simple graph with the  degree distribution prescribed by $\mathcal{D}(n)$, i.e., the degree sequence is a \textit{feasible} degree sequence.
	 	\item[(b)]  There exists a probability distribution $\mathbf{p} = \left( p_i^{\mathbf{d}}, \ i \in [p], \mathbf{d} \in \mathbbm{Z}_{+}^p \right)$ such that
		 the sequence of probability distributions $\mathbf{p}(n)$ associated with $\mathcal{D}(n)$ converges to the distribution $\mathbf{p}$. 
		\item[(c)] For each $i \in  [p]$, $\sum_{ \mathbf{d} } \mathbf{1}' \mathbf{d} p_i^{\mathbf{d}}(n) \rightarrow \sum_{\mathbf{d}}  \mathbf{1}' \mathbf{d} p_i^{\mathbf{d}}$.
		\item[(d)]  For each $i,j \in [p]$ such that $\lambda_i^j \triangleq \sum_{\mathbf{d}} d_j p_i^{\mathbf{d}} = 0$, the corresponding quantity 
		$\lambda_i^j(n) \triangleq \sum_{\mathbf{d}} d_j p_i^{\mathbf{d}}(n) = 0$ for all $n$.
		\item[(e)] The second moment of the degree distribution given by $\sum_{\mathbf{d}}  (\mathbf{1}' \mathbf{d})^2 p_i^{\mathbf{d}}$ exists (is finite) and  
		 		$\sum_{ \mathbf{d}} (\mathbf{1}' \mathbf{d})^2 p_i^{\mathbf{d}}(n) \rightarrow  \sum_{\mathbf{d}}  (\mathbf{1}' \mathbf{d})^2 p_i^{\mathbf{d}}$. 
	 \end{itemize}
	 \end{assumption}
	 
	 Note that the quantity $\sum_{ \mathbf{d} } \mathbf{1}' \mathbf{d} p_i^{\mathbf{d}}(n)$ in condition $(c)$ is simply $\frac{\sum_{v \in \mathcal{G}} deg(v)}{n}$. So this condition implies that the total number of edges is $O(n)$
	 , i.e., the graph is sparse. In condition $(e)$ the quantity $\sum_{ \mathbf{d}} (\mathbf{1}' \mathbf{d})^2 p_i^{\mathbf{d}}(n)$  is same as $\frac{\sum_{v \in \mathcal{G}} (deg(v))^2}{n}$. So this condition says that sum of 
	 the squares of the degrees is $O(n)$. It follows from condition (c) that $\lambda_i^j < \infty$ and that
	 $\lambda_i^j(n) \rightarrow \lambda_i^j$. The quantity  $\lambda_i^j$ is asymptotically the fraction of outgoing edges from $G_i$ to $G_j$. 
	 For $\mathbf{p}$ to be a valid degree distribution of a multipartite graph, we must have for each $1 \leq i < j \leq p$,  $\lambda_i^j = \lambda_j^i$ and for every $n$, we must have $\lambda_i^j(n) = \lambda_j^i(n)$. We have not 
	included this in the above conditions because it follows from condition (a). 
Condition $(d)$ excludes the case where there are sublinear number of edges between $G_i$ and $G_j$.

%
	
	There is an alternative way to represent some parts of Assumption \ref{assume}. For any probability distribution $\mathbf{p}$ on $\mathbbm{Z}_{+}^p$, let
	 $\mathbf{D}_{\mathbf{p}}$ denote the random variable distributed as $\mathbf{p}$. 
	 Then (b), (c) and (e) are equivalent to the following.
	\begin{itemize}
		\item[(b')] $\mathbf{D}_{\mathbf{p}(n)} \rightarrow \mathbf{D}_{\mathbf{p}}$ in distribution.
		
		\item[(c')] $\mathbf{E}[\mathbf{1}'\mathbf{D}_{\mathbf{p}(n)}] \rightarrow \mathbf{E}[\mathbf{1}'\mathbf{D}_{\mathbf{p}}]$.
		
		\item[(e')] $\mathbf{E}[(\mathbf{1}'\mathbf{D}_{\mathbf{p}(n)})^2] \rightarrow \mathbf{E}[(\mathbf{1}'\mathbf{D}_{\mathbf{p}})^2]$.	
	\end{itemize}
	
	\noindent The following preliminary lemmas follow immediately.		
	\begin{lem} \label{ui}
		 The conditions (b'), (c') and (e') together imply that the random variables $\left \{ \mathbf{1}'\mathbf{D}_{\mathbf{p}(n)} \right \}_{n \in \mathbbm{N}}$
		  and $ \left \{ \left( \mathbf{1}'\mathbf{D}_{\mathbf{p}(n)} \right)^2  \right \}_{n \in \mathbbm{N}}$ are uniformly integrable.
	\end{lem}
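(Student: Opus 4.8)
The plan is to reduce the statement to the following standard fact about real-valued random variables and then to apply it twice: if $Z_n\ge 0$, $Z_n\Rightarrow Z$ in distribution, and $\mathbf{E}[Z_n]\to\mathbf{E}[Z]<\infty$, then the family $\{Z_n\}_{n\in\mathbbm{N}}$ is uniformly integrable. To set this up, write $Y_n=\mathbf{1}'\mathbf{D}_{\mathbf{p}(n)}$ and $Y=\mathbf{1}'\mathbf{D}_{\mathbf{p}}$. Since the maps $\mathbf{d}\mapsto\mathbf{1}'\mathbf{d}$ and $\mathbf{d}\mapsto(\mathbf{1}'\mathbf{d})^2$ are continuous on $\mathbbm{Z}_+^p$, the continuous mapping theorem applied to (b') gives $Y_n\Rightarrow Y$ and $Y_n^2\Rightarrow Y^2$; moreover (c') and (e') say precisely that $\mathbf{E}[Y_n]\to\mathbf{E}[Y]<\infty$ and $\mathbf{E}[Y_n^2]\to\mathbf{E}[Y^2]<\infty$. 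Hence both $(Y_n,Y)$ and $(Y_n^2,Y^2)$ satisfy the hypotheses of the fact above, and the lemma follows.

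To prove the fact, fix $M>0$ and observe that $z\mapsto z\wedge M$ is bounded and continuous, so $Z_n\wedge M\Rightarrow Z\wedge M$ and therefore $\mathbf{E}[Z_n\wedge M]\to\mathbf{E}[Z\wedge M]$. On the event $\{Z_n>2M\}$ one has $Z_n-M>Z_n/2$, which yields the pointwise bound $Z_n\mathbbm{1}\{Z_n>2M\}\le 2(Z_n-M)^+=2(Z_n-Z_n\wedge M)$, and hence
\[
\mathbf{E}\bigl[Z_n\mathbbm{1}\{Z_n>2M\}\bigr]\le 2\bigl(\mathbf{E}[Z_n]-\mathbf{E}[Z_n\wedge M]\bigr).
\]
Letting $n\to\infty$ and using the two convergences above,
\[
\limsup_{n\to\infty}\mathbf{E}\bigl[Z_n\mathbbm{1}\{Z_n>2M\}\bigr]\le 2\bigl(\mathbf{E}[Z]-\mathbf{E}[Z\wedge M]\bigr)=2\,\mathbf{E}\bigl[(Z-M)^+\bigr],
\]
and the right-hand side tends to $0$ as $M\to\infty$ by monotone convergence, since $\mathbf{E}[Z]<\infty$. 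Because each individual $Z_n$ is integrable (indeed, in our application each $Y_n$, and hence $Y_n^2$, is bounded, as $\mathbf{p}(n)$ is finitely supported), the finitely many small-$n$ terms are dealt with separately, and one concludes that $\sup_{n}\mathbf{E}[Z_n\mathbbm{1}\{Z_n>K\}]\to 0$ as $K\to\infty$, i.e. uniform integrability.

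There is essentially no serious obstacle here; the argument is routine. The only point requiring a little care is that the truncation $z\mapsto z\,\mathbbm{1}\{z\le M\}$ is \emph{not} continuous, so one cannot simply pass to the limit in $\mathbf{E}[Z_n\,\mathbbm{1}\{Z_n\le M\}]$; working instead with the continuous truncation $z\wedge M$ and absorbing the overshoot through $Z_n\mathbbm{1}\{Z_n>2M\}\le 2(Z_n-M)^+$ circumvents this. Alternatively, one could invoke Skorokhod's representation theorem to replace $Z_n\Rightarrow Z$ by almost sure convergence of suitable copies and then apply a Pratt/Vitali-type argument, arriving at the same conclusion.
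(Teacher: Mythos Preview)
Your proof is correct. The paper itself does not supply a proof of this lemma; it simply states that the lemma ``follows immediately'' from (b'), (c'), (e'), treating as known the standard fact that for nonnegative random variables, convergence in distribution together with convergence of means to a finite limit implies uniform integrability. Your argument is precisely a clean proof of that standard fact, applied twice (to $Y_n$ and to $Y_n^2$), and the care you take in using the continuous truncation $z\wedge M$ rather than the discontinuous $z\,\mathbbm{1}\{z\le M\}$ is exactly the right point to highlight.
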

	\noindent Then using Lemma \ref{ui}, we prove the following statement.
	\begin{lem} \label{maxdegree}
	 	The maximum degree satisfies $\omega(n) = o(\sqrt{n})$.
	\end{lem}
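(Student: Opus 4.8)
The plan is to derive the bound directly from the second-moment uniform integrability supplied by Lemma~\ref{ui}, using the elementary fact that the maximum-degree vertex carries probability mass at least $1/n$ under the empirical distribution $\mathbf{p}(n)$. Write $X_n \triangleq \left(\mathbf{1}'\mathbf{D}_{\mathbf{p}(n)}\right)^2$. Since $\mathcal{G}$ has exactly $n$ vertices and at least one of them has total degree $\omega(n)$ (the maximum defining $\omega(n)$ in \eqref{maxdegreebound} is attained), the fraction of vertices whose total degree is at least $\omega(n)$ is at least $1/n$; as total degree is nonnegative this gives $\mathbf{P}\!\left(X_n \ge \omega(n)^2\right) \ge 1/n$.

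First I would convert this tail bound into a lower bound on a truncated expectation: for any real $t$ with $t \le \omega(n)^2$,
\[
\mathbf{E}\!\left[X_n \,\mathbbm{1}\{X_n \ge t\}\right] \;\ge\; \omega(n)^2\, \mathbf{P}\!\left(X_n \ge \omega(n)^2\right) \;\ge\; \frac{\omega(n)^2}{n}.
\]
Next I would invoke the uniform integrability of $\{X_n\}_n$ from Lemma~\ref{ui}, which says precisely that $\sup_n \mathbf{E}\!\left[X_n\, \mathbbm{1}\{X_n \ge t\}\right] \to 0$ as $t \to \infty$. Fixing $\epsilon>0$, choose $t_0$ with $\sup_n \mathbf{E}\!\left[X_n\, \mathbbm{1}\{X_n \ge t_0\}\right] < \epsilon$. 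For every $n$ with $\omega(n)^2 \ge t_0$ the displayed inequality (taken with $t=t_0$) yields $\omega(n)^2/n < \epsilon$; for the remaining $n$ we have $\omega(n)^2/n \le t_0/n < \epsilon$ as soon as $n > t_0/\epsilon$. Hence $\limsup_{n\to\infty} \omega(n)^2/n \le \epsilon$, and since $\epsilon>0$ was arbitrary, $\omega(n)^2/n \to 0$, that is, $\omega(n) = o(\sqrt{n})$.

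The argument is essentially a two-line computation once Lemma~\ref{ui} is in hand, so I do not anticipate a serious obstacle; the only point requiring care is the correct probabilistic reading of $\mathbf{p}(n)$ — namely that each single vertex of the $n$-vertex graph carries probability mass $1/n$ (and not $1/n_i^{\mathbf{d}}(n)$ or a per-part normalization), which is exactly what makes the lower bound $\mathbf{P}(X_n \ge \omega(n)^2) \ge 1/n$ valid. Note also that only the second-moment half of Lemma~\ref{ui} is used here; the uniform integrability of $\{\mathbf{1}'\mathbf{D}_{\mathbf{p}(n)}\}_n$ plays no role in this particular proof.
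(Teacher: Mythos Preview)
Your proof is correct and follows essentially the same approach as the paper: both use the uniform integrability of $\{(\mathbf{1}'\mathbf{D}_{\mathbf{p}(n)})^2\}$ from Lemma~\ref{ui} together with the fact that the maximum-degree vertex contributes at least $\omega(n)^2/n$ to the truncated second moment. Your case split on whether $\omega(n)^2 \ge t_0$ is in fact slightly more explicit than the paper's own argument.
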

	\begin{proof}
		For any $\epsilon > 0$, by Lemma \ref{ui}, there exists $q \in \mathbbm{Z}$ such that $\mathbf{E}[(\mathbf{1}'\mathbf{D}_{\mathbf{p}(n)})^2 \mathbf{1}_{\{ \mathbf{1}'\mathbf{D} > q\}}] < \epsilon  $.
		Observe that for large enough $n$, we have $\max\{  \frac{\omega^2(n)}{n}, \frac{q^2}{n}\} \leq \mathbf{E}[(\mathbf{1}'\mathbf{D}_{\mathbf{p}(n)})^2 \mathbf{1}_{\{ \mathbf{1}'\mathbf{D} > q\}}]  \leq \epsilon$.
		Since $\epsilon$ is arbitrary, the proof is complete.
	\end{proof}

		Let $S \triangleq \{(i,j) \ | \  \lambda_i^j > 0 \}$ and let $N \triangleq |S|$. For each $i \in [p]$, let $S_i \triangleq \{ j \in [p] \ | \  (i,j) \in S \}$.
		
	Note that by condition $(a)$, the set of feasible graphs with the degree distribution is non-empty. The random multipartite graph $\mathcal{G}$ we consider in this paper is drawn uniformly at random among all simple
	graphs with degree distribution given by $\mathcal{D}(n)$. The asymptotic behavior of $\mathcal{D}(n)$ is captured by the quantities $p_i^{\mathbf{d}}$. The existence of a giant component in 
	$\mathcal{G}$ as $n \rightarrow \infty$ is determined by the distribution $\mathbf{p}$.
	
\end{section}

\begin{section}{Statements of the main results}	 \label{sec:results}
	The neighborhood of a vertex in a random graph with given degree distribution resembles closely a special branching process associated with that degree distribution called the edge-biased 
	branching process. A detailed discussion of this phenomenon and results with strong guarantees for the giant component problem in random unipartite graphs can be found in \cite{BollobasRiordan12}
	and \cite{Riordan12}. 
	The edge biased branching process is defined via the edge biased degree distribution that is associated with the given degree distribution. Intuitively the edge-biased degree distribution can be 
	thought of as the degree distribution of vertices reached at the end point of an edge. Its importance will become clear when we will describe the exploration process in the sections that follow.
	We say that an edge is of type $(i,j)$ if it connects a vertex in $G_i$ with a vertex in  $G_j$. Then, as we will see, the type of the vertex in $G_j$ reached by following a random edge of type $(i,j)$ is $\mathbf{d}$ with 
	probability  $\frac{d_i p_j^{\mathbf{d}}}{\lambda_i^j}$.

	We now introduce the \emph{edge-biased branching process} which we denote by $\mathcal{T}$. Here $\mathcal{T}$ is a multidimensional branching
	process. The vertices of $\mathcal{T}$ except the root are associated with types $(i,j) \in S$. So other than the root, $\mathcal{T}$ has $N \leq p^2$ types of vertices. The root is assumed to be of a special type
	which will become clear from the description below.
	The process starts off with a root vertex $v$. With probability $p_i^{\mathbf{d}}$, the root $v$ gives rise to $d_j$ children of type $(i,j)$ for each $j \in [p]$. 	
	To describe the subsequent levels of $\mathcal{T}$ let us consider any vertex with type $(i,j)$. With probability $\frac{d_i p_j^{\mathbf{d}}}{\lambda_i^j}$ this vertex gives rise to $(d_m - \delta_{mi})$
	 children of type $(j,m)$ for each $m \in [p]$. The number of children generated by the vertices of $\mathcal{T}$ is independent for all vertices.
	  For each $n$, we define an edge-biased branching process $\mathcal{T}_n$ which we define in the same way as $\mathcal{T}$ by using the distribution 
	 $\mathcal{D}(n)$ instead of $\mathcal{D}$. We will also use the notations $\mathcal{T}(v)$ and $\mathcal{T}_n(v)$ whenever the type of the root node $v$ is specified.
	 
	 We denote the expected number of children of type $(j,m)$ generated by a vertex of type $(i,j)$ by
	\begin{align} \label{def:mu}
		\mu_{ijjm} \triangleq  \sum_{\mathbf{d}} (d_m - \delta_{im}) \frac{d_i p_j^{\mathbf{d}}}{\lambda_i^j}.
	\end{align}
	
		It is easy to see that $\mu_{ijjm} \geq 0$. Assumption 1(e) guarantees that $\mu_{ijjm} $ is finite. Note that a vertex of type $(i,j)$ cannot have children of type $(l,m)$ if $j \neq l$. But for 
		convenience we also introduce $\mu_{ijlm} = 0$ when $j \neq l$. By means of a remark we should note that  it is also possible to conduct
		 the analysis when we allow the second moments to be infinite (see for example
	\cite{MolloyReed1}, \cite{BollobasRiordan12}), but for simplicity, we do not pursue this route in this paper.
	
	Introduce a matrix $M \in \mathbbm{R}^N$ defined as follows.
	 Index the rows and columns of the matrix with double indices $(i,j) \in S$. There are $N$ such pairs denoting the $N$ rows and columns
	of $M$. The entry of $M$ corresponding to row index $(i,j)$ and column index $(l,m)$ is set to be $\mu_{ijlm}$. 
	\begin{defin}
		Let $\mathbf{A} \in \mathbbm{R}^{N \times N}$ be a matrix. Define a graph $\mathcal{H}$ on $N$ nodes where for each pair of nodes $i$ and $j$, the directed edge $(i,j)$ exists if and only if $A_{ij} > 0$.
		Then the matrix $\mathbf{A}$ is said to be \textit{irreducible} if the graph $\mathcal{H}$ is strongly connected, i.e., there exists a directed path in $\mathcal{H}$ between any two nodes in $\mathcal{H}$.
	\end{defin}
	
	We now state the well known Perron-Frobenius Theorem for non-negative irreducible matrices. This theorem has extensive applications in the study  of multidimensional branching processes
	 (see for example \cite{KestenStigum}). 
	
	\begin{thm}[Perron-Frobenius Theorem]
		Let $\mathbf{A}$ be a non-negative irreducible matrix. Then 
		\begin{itemize}
			\item[(a).] $\mathbf{A}$ has a positive eigenvalue $\gamma > 0$ such that any other eigenvalue of $\mathbf{A}$ is strictly smaller than $\gamma$ in absolute value.
			\item[(b).] There exists a left eigenvector $\mathbf{x}$ of $\mathbf{A}$ that is unique up to scalar multiplication
			 associated with the eigenvalue $\gamma$ such that all entries of $\mathbf{x}$ are positive.
		\end{itemize}
	\end{thm}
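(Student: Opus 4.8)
The plan is to establish the theorem through the Collatz--Wielandt variational characterization of the Perron root, with the single technical device being the observation that for an irreducible non-negative $\mathbf{A} \in \mathbbm{R}^{N\times N}$ the matrix $(\mathbf{I}+\mathbf{A})^{N-1}$ is \emph{strictly} positive --- which is exactly the statement that strong connectedness of the digraph $\mathcal{H}$ gives, for every ordered pair of nodes, a directed walk of length at most $N-1$. For $\mathbf{x}\ge 0$, $\mathbf{x}\ne 0$, set $r(\mathbf{x}) = \min_{i:\,x_i>0} (\mathbf{A}\mathbf{x})_i/x_i$ and let $\gamma = \sup\{r(\mathbf{x}):\mathbf{x}\ge 0,\ \mathbf{x}\ne 0\}$; by homogeneity the supremum may be taken over the simplex $\Delta = \{\mathbf{x}\ge 0:\mathbf{1}'\mathbf{x}=1\}$. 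First I would show the supremum is attained. The function $r$ is only lower semicontinuous on $\Delta$, but one checks that $\mathbf{A}\mathbf{x}\ge r(\mathbf{x})\,\mathbf{x}$ together with the fact that $(\mathbf{I}+\mathbf{A})^{N-1}$ commutes with $\mathbf{A}$ and preserves the non-negative orthant yields $r\big((\mathbf{I}+\mathbf{A})^{N-1}\mathbf{x}\big)\ge r(\mathbf{x})$; hence $\gamma$ is unchanged if the supremum is restricted to the compact set $\{(\mathbf{I}+\mathbf{A})^{N-1}\mathbf{x}:\mathbf{x}\in\Delta\}$, all of whose elements are strictly positive and on which $r$ is continuous. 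So there is $\mathbf{z}>0$ with $r(\mathbf{z})=\gamma$. That $\gamma>0$ follows because irreducibility forces every row of $\mathbf{A}$ to have a positive entry (for $N\ge 2$; the case $N=1$ is immediate), so $r(\mathbf{1})=\min_i\sum_j A_{ij}>0$. Finally, if $\mathbf{A}\mathbf{z}-\gamma\mathbf{z}\ge 0$ were nonzero, applying $(\mathbf{I}+\mathbf{A})^{N-1}$ would give $\mathbf{A}\mathbf{w}>\gamma\mathbf{w}$ with $\mathbf{w}=(\mathbf{I}+\mathbf{A})^{N-1}\mathbf{z}>0$, hence $r(\mathbf{w})>\gamma$, a contradiction; thus $\mathbf{A}\mathbf{z}=\gamma\mathbf{z}$. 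This produces the positive eigenvalue $\gamma$ together with a strictly positive eigenvector.

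For the comparison with other eigenvalues, note that any eigenvalue $\lambda$ with eigenvector $\mathbf{w}$ obeys $|\lambda|\,|w_i| = |(\mathbf{A}\mathbf{w})_i| \le (\mathbf{A}|\mathbf{w}|)_i$ entrywise, so $r(|\mathbf{w}|)\ge|\lambda|$ and therefore $|\lambda|\le\gamma$. For uniqueness (part (b)), given any real eigenvector $\mathbf{y}$ for $\gamma$, let $t^{\ast}=\max\{t:\mathbf{z}-t\mathbf{y}\ge 0\}$; then $\mathbf{z}-t^{\ast}\mathbf{y}\ge 0$ is again a $\gamma$-eigenvector, and if it were nonzero the $(\mathbf{I}+\mathbf{A})^{N-1}$ argument would make it strictly positive, contradicting maximality of $t^{\ast}$; hence $\mathbf{y}$ is a scalar multiple of $\mathbf{z}$, and taking real and imaginary parts of a complex $\gamma$-eigenvector shows the $\gamma$-eigenspace is one dimensional, with entries of the same sign as those of $\mathbf{z}$, i.e.\ positive after normalization. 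For the strict domination in part (a), suppose $|\lambda|=\gamma$ with eigenvector $\mathbf{w}$; then $\mathbf{A}|\mathbf{w}|\ge\gamma|\mathbf{w}|$ forces $\mathbf{A}|\mathbf{w}|=\gamma|\mathbf{w}|$ and $|\mathbf{w}|>0$ by the argument above, so equality holds in every triangle inequality $|\sum_j A_{ij}w_j|\le\sum_j A_{ij}|w_j|$. Since $|\mathbf{w}|>0$, this means all coordinates $w_j$ entering a given row with positive weight $A_{ij}$ share a common complex phase; propagating this equality of phases along directed paths of $\mathcal{H}$ (which is strongly connected) shows $\mathbf{w}=e^{\mathrm{i}\theta}|\mathbf{w}|$ for a single $\theta$, and substituting back into $\mathbf{A}\mathbf{w}=\lambda\mathbf{w}$ gives $\lambda=\gamma$. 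Hence $|\lambda|<\gamma$ for every eigenvalue $\lambda\ne\gamma$.

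The step that requires the most care is the attainment of the supremum in the Collatz--Wielandt formula, since $r(\cdot)$ is genuinely discontinuous on the boundary of $\Delta$; the device $(\mathbf{I}+\mathbf{A})^{N-1}>0$ together with the monotonicity $r\big((\mathbf{I}+\mathbf{A})^{N-1}\mathbf{x}\big)\ge r(\mathbf{x})$ is what circumvents this, and it is then reused verbatim for the eigenvalue equation, for the uniqueness of the eigenvector, and for the strictness of the spectral gap --- so the crux is verifying these two properties of $(\mathbf{I}+\mathbf{A})^{N-1}$ carefully. A secondary subtlety is the phase-propagation argument in part (a), where strong connectedness of $\mathcal{H}$ must be invoked in precisely the form given by the definition of irreducibility.
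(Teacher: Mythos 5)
The paper cites the Perron--Frobenius theorem as a classical result and provides no proof of its own, so there is no in-paper argument to compare against; I will therefore just assess your proof on its merits.

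Most of your argument is the standard Collatz--Wielandt proof and is sound: the key device $(\mathbf{I}+\mathbf{A})^{N-1}>0$ for irreducible non-negative $\mathbf{A}$, the monotonicity $r\big((\mathbf{I}+\mathbf{A})^{N-1}\mathbf{x}\big)\ge r(\mathbf{x})$, attainment of the supremum on the compact strictly-positive image, the eigenvalue equation for the maximizer, $|\lambda|\le\gamma$, and the one-dimensionality of the $\gamma$-eigenspace via the extremal-$t^{\ast}$ argument are all correct. Two points, one cosmetic and one substantive. Cosmetically, the theorem asks for a \emph{left} eigenvector in part (b), whereas you construct a right one; this is harmless because $\mathbf{A}^{T}$ is irreducible precisely when $\mathbf{A}$ is (reversing the edges of $\mathcal{H}$ preserves strong connectedness), so the whole argument can be rerun for $\mathbf{A}^{T}$, but you should say so.

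The substantive issue is your final phase-propagation step for the strict inequality $|\lambda|<\gamma$. Equality in the triangle inequality for row $i$ shows that all $w_j$ with $A_{ij}>0$ share a common phase $\psi_i$, and the eigenvalue relation $\lambda w_i=\sum_j A_{ij}w_j$ then gives $\theta_j=\theta_i+\arg\lambda$ along every directed edge $i\to j$; propagating around a directed cycle of length $\ell$ forces $\ell\arg\lambda\equiv 0\ (\mathrm{mod}\ 2\pi)$, \emph{not} $\arg\lambda=0$. So the conclusion $\mathbf{w}=e^{\mathrm{i}\theta}|\mathbf{w}|$ requires that the gcd of directed cycle lengths in $\mathcal{H}$ be $1$ (primitivity), which irreducibility alone does not give. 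Indeed, the strict inequality in part (a) is false for irreducible but periodic matrices: for $\mathbf{A}=\left(\begin{smallmatrix}0&1\\ 1&0\end{smallmatrix}\right)$ the eigenvalues are $\pm 1$, both of modulus $\gamma=1$. This is not merely a pedantic edge case for this paper --- the paper's own bipartite illustration produces $M=\left(\begin{smallmatrix}0&\mu_{1221}\\ \mu_{2112}&0\end{smallmatrix}\right)$, whose eigenvalues $\pm\sqrt{\mu_{1221}\mu_{2112}}$ tie in modulus, so statement (a) as written is violated by the paper's own example. What irreducibility actually gives is $|\lambda|\le\gamma$, that $\gamma$ is a simple root of the characteristic polynomial, and a positive left/right eigenvector pair; the strict spectral gap needs the extra primitivity hypothesis. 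Your proof tacitly assumes that extra hypothesis at the phase-propagation step, so either restrict (a) to primitive matrices or weaken it to $|\lambda|\le\gamma$ with $\gamma$ simple --- the latter is all the paper actually uses, since the downstream results only invoke whether the Perron root is above or below $1$.
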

		
	We introduce the following additional assumption before we state our main results. 
	
	\begin{assumption} \label{assume1} The degree sequence ${\{\mathcal{D}(n)\}}_{n \in \mathbbm{N}}$ satisfies the following conditions.
		\begin{itemize}
			\item[(a).] The matrix $M$ associated with the degree distribution $\mathbf{p}$ is irreducible.
			\item[(b).] For each $i \in [p]$, $S_i \neq \emptyset$.
		\end{itemize}	
	\end{assumption}
	Assumption \ref{assume1} eliminates several degenerate cases. For example consider a degree distribution with $p = 4$, i.e., a $4$-partite random graph. Suppose for $i = 1,2$, we have
	$p_i^{\mathbf{d}}$ is non-zero
	only when $d_3 = d_4 = 0$, and for $i = 3,4$, $p_i^{\mathbf{d}}$ is non-zero
	only when $d_1 = d_2= 0$. In essence this distribution is associated with a random graph which is simply the union of two disjoint bipartite graphs. In particular such a graph may contain more than one
	giant component. However this is ruled out under our assumption. Further, our assumption allows us to show that the giant component has linearly many vertices in each of the $p$ parts of the multipartite graph.
	
	Let
	\begin{align} \label{survivalprobability}
		\eta \triangleq 1 - \sum_{i=1}^{\infty} \mathbf{P}(|\mathcal{T}| = i) = \mathbf{P}(|\mathcal{T}| = \infty).
	\end{align}
	Namely, $\eta$ is the survival probability of the branching process $\mathcal{T}$.
	We now state our main results.
	
	\begin{thm} \label{theorem1}
		Suppose that the Perron Frobenius eigenvalue of $M$ satisfies $\gamma > 1$. Then the following statements hold. 
		\begin{itemize}
		\item[(a)] The random graph $\mathcal{G}$ has a giant component $C \subseteq \mathcal{G}$ 
		w.h.p. Further, the size of this component $C$ satisfies
		\begin{align}
			\lim_{n \rightarrow \infty} \mathbf{P} \left(\eta - \epsilon <  \frac{|C|}{n} < \eta + \epsilon \right) = 1,
		\end{align}
		 for any $\epsilon>0$.
		\item[(b)] All components of $\mathcal{G}$ other than $C$ are of size $O(\log n)$ w.h.p.
	\end{itemize}
	\end{thm}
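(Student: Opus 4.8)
The plan is to analyse the Molloy--Reed exploration process run on the configuration model associated with $\mathcal D(n)$, keeping track of the vector $\mathbf a(t)=(a_{ij}(t))_{(i,j)\in S}$ that counts the \emph{active} (``open'') clones of each type $(i,j)$ after $t$ steps. In the configuration model one replaces each vertex of type $\mathbf d$ in $G_i$ by $d_j$ clones of type $(i,j)$ for every $j$, and joins the $(i,j)$-clones to the $(j,i)$-clones by a uniformly random perfect matching; by Assumption~\ref{assume}(a),(d) the two classes have the same size $n\lambda_i^j(n)=\Theta(n)$. The exploration repeatedly selects an active clone, of some type $(i,j)$, exposes its partner among the unmatched $(j,i)$-clones, and --- if that partner lies in a not-yet-seen vertex $u\in G_j$ of type $\hat{\mathbf d}$ --- activates the remaining $\hat d_m-\delta_{im}$ clones of type $(j,m)$ of $u$ for each $m$; when $\mathbf a(t)=\mathbf 0$ one restarts from a fresh unexplored vertex. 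The key local observation is that while only $o(n)$ clones have been exposed, the partner of a type-$(i,j)$ clone is, up to $o(1)$ total-variation error, a fresh draw from the edge-biased law $\hat{\mathbf d}\mapsto\hat d_i\,p_j^{\hat{\mathbf d}}(n)/\lambda_i^j(n)$; hence on a step that processes a type-$(i,j)$ clone the conditional mean of $\mathbf a(t+1)-\mathbf a(t)$ is $-\mathbf e_{ij}+\sum_m\mu_{ijjm}\,\mathbf e_{jm}+o(1)$, governed by the matrix $M$ of \eqref{def:mu}, where $\mathbf e_{ij}$ is the standard basis vector indexed by $(i,j)\in S$.

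First I would dispose of the upper bound together with the branching-process picture. For $v$ a uniformly random vertex, while the component being grown has at most $K$ vertices at most $K\,\omega(n)=o(\sqrt n)$ clones have been touched (using $\omega(n)=o(\sqrt n)$ from Lemma~\ref{maxdegree}), so the revealed neighbourhood of $v$ couples, with probability $1-o(1)$, with the first $K$ generations of $\mathcal T_n$; since the offspring laws of $\mathcal T_n$ converge to those of $\mathcal T$ (Assumption~\ref{assume}(b) and $\lambda_i^j(n)\to\lambda_i^j>0$), one gets $\mathbf P(|C(v)|>K)\to\eta_K:=\mathbf P(|\mathcal T|>K)$, and $\eta_K\downarrow\eta$ by \eqref{survivalprobability}. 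Thus $\mathbf E\,\#\{v:|C(v)|>K\}=(\eta_K+o(1))n$, and a Wormald/Azuma-type concentration argument for the single sequential exploration of all of $\mathcal G$ --- with the unbounded increments controlled by truncating degrees at a slowly growing level and appealing to Lemma~\ref{ui} --- upgrades this to $\#\{v:|C(v)|>K\}=(\eta_K+o(1))n$ w.h.p. Since the largest component is contained in $\{v:|C(v)|>K\}$ and $\eta_K<\eta+\epsilon$ for large $K$, this already gives $|C|/n<\eta+\epsilon$ w.h.p.

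The substance is the lower bound, where Assumption~\ref{assume1} and the Perron--Frobenius structure enter. Let $\gamma>1$ be the Perron--Frobenius eigenvalue of the irreducible matrix $M$ and, applying the Perron--Frobenius theorem to $M^\top$, let $\mathbf x>0$ be a right Perron eigenvector, $M\mathbf x=\gamma\mathbf x$; put $L(t)=\mathbf x^\top\mathbf a(t)\ge 0$, which vanishes exactly when the current component is complete. Dotting the mean increment above with $\mathbf x$ gives $\mathbf E[L(t+1)-L(t)\mid\mathcal F_t]=-x_{ij}+(M\mathbf x)_{ij}+o(1)=(\gamma-1)x_{ij}+o(1)\ge c>0$ on any step processing a type-$(i,j)$ clone, as long as fewer than $\delta n$ vertices have been explored; here $c=\tfrac{1}{2}(\gamma-1)\min_{ij}x_{ij}$ and $\delta$ is a small constant chosen so that the sampling-without-replacement and collision corrections --- bounded using Assumption~\ref{assume}(e) and $L(t)=o(n)$ --- stay below $c$. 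On this window $L(t)-ct$ is a submartingale, from which two standard facts follow: (i) once $L$ has climbed to a large constant, optional stopping together with a large-deviation bound (again via the degree truncation) shows the exploration reaches $\delta n$ explored vertices before $L$ returns to $0$, with probability $1-o(1)$; and (ii) the probability that the exploration from $v$ ever reaches that constant height converges, as $n\to\infty$, to the survival probability of $\mathcal T$ --- one sandwiches the exploration from below by a genuinely supercritical multitype Galton--Watson process $\mathcal T'$ obtained by thinning the edge-biased offspring (capping degrees at a large constant $B$ and discounting for sampling without replacement), arranged so that at every step the vector of clones it would add for a given clone type is coordinatewise below the exploration's, with mean matrix close to $M$ and hence Perron eigenvalue $\gamma'>1$; letting $B\to\infty$ drives $\gamma'\to\gamma$ and, by continuity of the survival probability of an irreducible supercritical branching process in its offspring law, $\mathbf P(|\mathcal T'|=\infty)\to\eta$. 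Combining $\mathbf P(|C(v)|\ge\delta n)\le\mathbf P(|C(v)|>K)\to\eta_K$ with the lower bound from (i) and (ii) squeezes $\mathbf P(|C(v)|\ge\delta n)\to\eta$, so $\mathbf E\,\#\{v:|C(v)|\ge\delta n\}=(\eta+o(1))n$ and, since $\eta>0$, a component of size $\ge\delta n$ exists w.h.p.

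It remains to see that those vertices all lie in one component, which also yields part (b). For uniqueness I would run a two-round sprinkling argument: expose a $(1-\beta)$-fraction of the matching, carry out the analysis above, then reveal the remaining $\beta$-fraction; two disjoint components each of size $\ge K_0\log n$ would each retain $\Omega(\log n)$ still-unmatched clones, and by Assumption~\ref{assume1} (irreducibility of $M$ and $S_i\ne\emptyset$ for every $i$) these must include clones of mutually compatible types $(i,j)$ and $(j,i)$, so the probability the second round fails to join them is $n^{-\Omega(1)}$; a union bound over pairs of seed vertices rules this out w.h.p. Hence, w.h.p., the $\ge\delta n$ component found above is the unique giant $C$; writing $|C|=\#\{v:|C(v)|>K_n\}-\#\{v:K_n<|C(v)|<\delta n\}$ for a slowly growing $K_n\to\infty$ (valid since $C$ is the only component of size $\ge\delta n$), the first term is $(\eta+o(1))n$ w.h.p. by the concentration above (extended to growing $K_n=o(\sqrt n)$, with the same coupling and a uniform tail estimate from Assumption~\ref{assume}(e)), while the second has expectation $\mathbf E\,\#\{v:|C(v)|>K_n\}-\mathbf E\,\#\{v:|C(v)|\ge\delta n\}\to\eta n-\eta n=o(n)$ and so is $o(n)$ w.h.p. by Markov; thus $|C|/n\to\eta$ in probability, which is part (a). For part (b), a component of size in $[A\log n,\delta n)$ would force the submartingale $L$ to make an excursion of length $\ge A\log n$ staying positive yet returning to $0$ before $\delta n$ vertices are explored; the positive drift $c$ and the increment control make the probability of such an excursion from a fixed start at most $n^{-\Omega(A)}$, so a union bound over vertices with $A$ large, together with the absence of a second component of size $\ge\delta n$, shows every component but $C$ has size $O(\log n)$ w.h.p. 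The step I expect to be the main obstacle is (ii) above: building the underestimating process and, crucially, preventing the per-step $o(1)$ errors from accumulating over the $\Theta(n)$ steps on which depletion matters, so that coordinatewise domination and the drift bound genuinely hold throughout $\{t:\text{explored }<\delta n\}$; a close second is the multitype bookkeeping in the sprinkling step, where --- unlike the unipartite case --- one must verify that two large components always expose clones of compatible types, which is exactly what irreducibility buys.
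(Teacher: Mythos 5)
Your overall skeleton matches the paper's: explore the configuration model, dot the active-clone vector with the Perron--Frobenius eigenvector of (a perturbation of) $M$ to get a one-dimensional process with positive drift, run a one-sided Hoeffding/Azuma bound to push it to linear size, couple the early exploration with $\mathcal T_n$ to identify the limit probability $\eta$, and concentrate the small-component count with an edge-exposure martingale. The Lyapunov function, the underestimating process (your degree-capped, without-replacement-discounted thinning plays the role of the paper's $\pi_{ji}^{\mathbf d}$-process $B_i^j$), and the branching-process coupling are all the same ideas.

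Where you genuinely diverge, and where I see real gaps, is in uniqueness and in part (b). You propose a two-round sprinkling: hold back a $\beta$-fraction of the matching, argue two components of size $\ge K_0\log n$ each retain $\Omega(\log n)$ unmatched clones, and claim irreducibility of $M$ forces some compatible pair $(i,j),(j,i)$ among them. In the multipartite setting that last step is not a consequence of irreducibility. A component of size $\Theta(\log n)$ can very well have confined itself to a proper subset of the parts, so its unmatched clones occupy only a strict subset of $S$; two such components could have disjoint, mutually incompatible type profiles, and then the second sprinkling round can never join them. Irreducibility of $M$ only says the \emph{branching process} eventually visits all types with positive probability --- it does not say that every logarithmic-size realization does so. The paper evades exactly this trap with Lemma~\ref{coverall}: it continues the exploration of the component that has already reached linear size and uses irreducibility \emph{of the exploration drift} to show that this particular component eventually has $\delta_1 n$ active clones of \emph{every} type in $S$. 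Uniqueness and part~(b) then follow from a simple avoidance bound: any other component of size $\ge\beta\log n$ must choose, for $\beta\log n$ consecutive steps, a partner that misses all of those $\delta_1 n$ clones of the relevant type, which has probability $\le(1-\delta_1)^{\beta\log n}=o(n^{-2})$. That argument requires no compatibility claim about the second component.

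The second gap is quantitative, in your part~(b). You argue that a component of size in $[A\log n,\delta n)$ forces the positive-drift submartingale $L$ to make an excursion of length $\ge A\log n$ returning to $0$, and that this has probability $n^{-\Omega(A)}$. With increments bounded only by $O(\omega(n))$, the one-sided Hoeffding bound on a length-$t$ excursion gives $\exp\bigl(-\Theta(t/\omega(n)^2)\bigr)$, so for $t=A\log n$ you get $n^{-\Omega(A/\omega(n)^2)}$, which is not $n^{-\Omega(A)}$ unless $\omega(n)=O(1)$. (This is precisely the source of the $\omega(n)^2$ factor in the paper's \emph{subcritical} Theorem~\ref{theorem2}.) The avoidance bound via Lemma~\ref{coverall} sidesteps this dependence entirely in the supercritical case, which is why the paper can assert $O(\log n)$ for the non-giant components. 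To salvage your route you would have to truncate degrees and prove the drift survives truncation, which is essentially re-deriving the paper's explicit underestimating construction.

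So: identical strategy for existence and for $|C|/n\to\eta$, but your replacement of Lemma~\ref{coverall} by sprinkling plus an excursion bound leaves two holes --- the type-compatibility step is unjustified, and the excursion tail is too weak when $\omega(n)\to\infty$.
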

	
	\begin{thm} \label{theorem2}
		Suppose that the Perron Frobenius eigenvalue of $M$ satisfies $\gamma < 1$. Then all components of
		the random graph $\mathcal{G}$ are of size
		$O(\omega(n)^2\log n)$ w.h.p.
	\end{thm}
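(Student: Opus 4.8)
The plan is to reduce to the configuration model and then to dominate the growth of an arbitrary component, explored by the Molloy--Reed process, by a one-dimensional Lyapunov function built from the Perron--Frobenius eigenvector of $M$. By the results of Section~\ref{sec:configuration} it suffices to prove the bound for the configuration model, since under Assumption~\ref{assume} the probability that it yields a simple graph is bounded away from zero. So fix an arbitrary vertex $v$ and run the exploration started at $v$; let $Z_{ij}(t)$ be the number of open clones of type $(i,j)\in S$ after $t$ steps (these are the only clone types present, by Assumption~\ref{assume}(d)), let $\mathcal F_t$ be the natural filtration, and let $\tau:=\min\{t:\sum_{(i,j)\in S}Z_{ij}(t)=0\}$ be the step at which the component $C_v$ of $v$ is fully revealed. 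Every step discovers at most one new vertex, so $|C_v|\le\tau+1$, and it is enough to show $\mathbf P(\tau>K\,\omega(n)^2\log n)\le n^{-2}$ for a suitable constant $K$, after which a union bound over the $n$ vertices, together with the observation that every component equals $C_v$ for some $v$, finishes the proof. By Assumption~\ref{assume1}(a) the nonnegative matrix $M$ is irreducible, so the Perron--Frobenius theorem supplies a right eigenvector $\mathbf x=(x_{ij})_{(i,j)\in S}>0$ with $M\mathbf x=\gamma\mathbf x$; put $\underline x:=\min_{(i,j)}x_{ij}>0$, $\bar x:=\|\mathbf x\|$, and define $\Phi(t):=\sum_{(i,j)\in S}x_{ij}Z_{ij}(t)$. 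Writing $\ell:=K\,\omega(n)^2\log n$, if $\ell\ge\beta n$ for the constant $\beta$ chosen below, then $|C_v|\le n\le(K/\beta)\,\omega(n)^2\log n$ deterministically and there is nothing to prove; so we may assume $\ell<\beta n$.

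The heart of the argument is a uniform negative-drift estimate: there are $\delta>0$ and $\beta>0$ so that for all $n$ large and all $t\le\beta n$, on the event that the $t$-th step processes a clone of type $(i,j)$,
\begin{align*}
\mathbf E\bigl[\Phi(t+1)-\Phi(t)\mid\mathcal F_t\bigr]\le-\delta .
\end{align*}
To prove this, note that processing a type-$(i,j)$ clone removes it (changing $\Phi$ by $-x_{ij}$), removes its partner (a non-positive further change, which we discard), and, when the partner sits at a previously undiscovered vertex $w\in G_j$ of type $\mathbf d$, opens $d_m-\delta_{im}\ge0$ clones of type $(j,m)$ for each $m$; here $d_m-\delta_{im}\ge0$ because such a $w$ necessarily has $d_i\ge1$. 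Since a newly discovered vertex has all of its clones still unmatched and at most $2t$ clones have been matched so far, the conditional expected number of type-$(j,m)$ clones opened is at most $\bigl(1+O(t/n)\bigr)\,\mu_{ijjm}(n)$, where $\mu_{ijjm}(n)$ denotes the finite-$n$ version of \eqref{def:mu} with $\mathbf p(n)$ in place of $\mathbf p$, and $\mu_{ijjm}(n)\to\mu_{ijjm}$ by Lemma~\ref{ui} together with Assumption~\ref{assume}(b),(c),(e). Hence
\begin{align*}
\mathbf E\bigl[\Phi(t+1)-\Phi(t)\mid\mathcal F_t\bigr]\le-x_{ij}+\bigl(1+O(\beta)\bigr)\sum_m\mu_{ijjm}(n)\,x_{jm}\le-x_{ij}+\bigl(1+O(\beta)\bigr)\bigl(\gamma x_{ij}+o(1)\bigr),
\end{align*}
using $\sum_m\mu_{ijjm}x_{jm}=(M\mathbf x)_{ij}=\gamma x_{ij}$. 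Choosing $\beta$ small enough that $(1+O(\beta))\gamma\le\tfrac12(1+\gamma)<1$, and then $n$ large, gives the bound with, say, $\delta=\tfrac14(1-\gamma)\underline x$. In the remaining ``back-edge'' case the partner lies in the already-explored part and $\Phi$ only decreases, so the estimate is unaffected.

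Given the drift estimate, the finish is the standard supermartingale/large-deviation step. We have $\Phi(0)\le\bar x\,\omega(n)$ and increments $|\Phi(t+1)-\Phi(t)|=O(\omega(n))$, since one step opens at most $\mathbf{1}'\mathbf{d}\le\omega(n)$ clones; hence $Y_t:=\Phi\bigl(t\wedge\tau\wedge\lfloor\beta n\rfloor\bigr)+\delta\,\bigl(t\wedge\tau\wedge\lfloor\beta n\rfloor\bigr)$ is a nonnegative supermartingale with increments $O(\omega(n))$, and on $\{\tau>\ell\}$ with $\ell<\beta n$ we have $Y_\ell\ge\delta\ell$. Azuma--Hoeffding therefore gives, for $n$ large,
\begin{align*}
\mathbf P(\tau>\ell)\le\mathbf P\bigl(Y_\ell-Y_0\ge\delta\ell-\bar x\,\omega(n)\bigr)\le\exp\!\left(-\,c\,\frac{\delta^2\ell}{\omega(n)^2}\right)=n^{-c'\delta^2K}
\end{align*}
for absolute constants $c,c'>0$ (absorbing $\bar x$). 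Taking $K$ large enough that $c'\delta^2K>2$ yields $\mathbf P(|C_v|>\ell+1)\le n^{-2}$, and the union bound completes the proof that w.h.p.\ every component of $\mathcal G$ has $O(\omega(n)^2\log n)$ vertices.

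The step I expect to be the main obstacle is the uniform drift estimate. Unlike the unipartite case of Molloy and Reed, the exploration cannot be bounded below by a single scalar random walk; one must verify that for every one of the $N$ clone types the multitype expected offspring contracts after reweighting by the common Lyapunov vector $\mathbf x$, and that the finite-$n$ corrections and the depletion corrections — which must be controlled uniformly over all types simultaneously — stay below the margin $(1-\gamma)\underline x$. Arranging the constants $\beta$ and $K$ consistently, so that the target window $K\,\omega(n)^2\log n$ is short enough for the depletion bound to hold yet long enough for the large-deviation estimate to beat $n^{-1}$, is the other delicate point.
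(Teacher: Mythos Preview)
Your proposal is correct and follows essentially the same strategy as the paper: build a linear Lyapunov function from the Perron--Frobenius eigenvector of $M$, establish a uniform negative drift for the first $O(\omega(n)^2\log n)$ steps via the perturbation estimates of Lemmas~\ref{perturbation} and~\ref{perturbation1}, apply a one-sided Hoeffding/Azuma bound, and finish by a union bound over vertices. The only cosmetic differences are that you condition on the type of the processed clone and use the \emph{right} eigenvector $M\mathbf{x}=\gamma\mathbf{x}$ to obtain a per-type drift bound, whereas the paper averages over the randomly chosen clone type via the proportion vector $\mathbf{A}(k)$ and uses the left eigenvector; these are equivalent formulations. Your explicit treatment of the case $\ell\ge\beta n$ (where the bound holds trivially since $|C_v|\le n$) is a clean addition that the paper leaves implicit.
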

	
	The conditions of Theorem \ref{theorem1} where a giant component exists is generally referred to in the literature
	 as the supercritical case and that of Theorem \ref{theorem2} marked by the absence of a giant component is referred to as
	the subcritical case. The conditions under which giant component exists in random bipartite graphs was derived in \cite{Newmann01} using generating function heuristics. 
	We now consider the special case of a bipartite graph and show that the conditions implied by Theorem \ref{theorem1} and Theorem \ref{theorem2} reduce to that in \cite{Newmann01}.
	In this case $p=2$ and $N  = 2$. The type of all vertices $\mathbf{d}$ in $G_1$ are of the form $\mathbf{d} = (0,j)$ and those in $G_2$ are of the form $\mathbf{d} = (k,0)$.
	To match the notation in \cite{Newmann01}, we let $p_1^{\mathbf{d}} = p_j$ when $\mathbf{d} = (0,j)$ and $p_2^{\mathbf{d}} = q_k$ when $\mathbf{d} = (k,0)$. So $\lambda_1^2 = \lambda_2^1 = 
	\sum_{\mathbf{d}} d_2 p_1^{\mathbf{d}} = \sum_j j p_j = \sum_k k q_k$.
	Using the definition of $\mu_{1221}$ from equation (\ref{def:mu}), we get
	\begin{align*}
		\mu_{1221} = \sum_{\mathbf{d}} (d_1 -  \delta_{11}) \frac{d_1 p_2^{\mathbf{d}}}{\lambda_1^2} =  \frac{\sum_k k(k-1)q_k}{\lambda_1^2}.
	\end{align*}
	Similarly we can compute $\mu_{2112} =  \frac{\sum_j j(j-1)p_j}{\lambda_1^2}$. 
	From the definition of $M$, 
	\begin{align*}
		M = \left[ \begin{array}{cc} 0 & \mu_{1221} \\
							\mu_{2112} & 0	 \end{array}\right].
	\end{align*}
	The Perron-Frobenius norm of $M$ is its spectral radius
	 and is given by $(\mu_{1221}) (\mu_{2112})$. So the condition for the existence of a giant component according to Theorem \ref{theorem1} is given by $(\mu_{1221})(  \mu_{2112}) - 1 > 0$
	  which after some algebra
	reduces to 
	\begin{align*}
		\sum_{j,k} jk(jk -  j - k)p_jq_k > 0.
	\end{align*}
	This is identical to the condition mentioned in \cite{Newmann01}.
	The rest of the paper is devoted to the proof of Theorem \ref{theorem1} and Theorem \ref{theorem2}.
\end{section}

\begin{section}{Configuration Model} \label{sec:configuration}
	The configuration model \cite{Wormald78},  \cite{Bollobas}, \cite{BenderCanfield} is a convenient tool to study random graphs with given degree distributions. It provides a method to generate a multigraph from the 
	given degree distribution. When conditioned on the event that the graph is simple, the resulting distribution is uniform among all simple graphs with the given degree distribution. 
	We describe below the way to generate a configuration model from a given multipartite degree distribution.
	\begin{enumerate}
		\item For each of the $n_i^{\mathbf{d}}(n)$ vertices in $G_i$ of type $\mathbf{d}$ introduce $d_j$ clones of type $(i,j)$. An ordered pair $(i,j)$ associated with a clone designates that the clones belongs to $G_i$
		and has a neighbor in $G_j$. From the discussion following Assumption \ref{assume}, the number of clones of type $(i,j)$ is same as the number of clones of type $(j,i)$.
		\item For each pair $(i,j)$, perform a uniform random matching of the clones of type $(i,j)$ with the clones of type $(j,i)$.
		\item Collapse all the clones associated with a certain vertex back into a single vertex. This means all the edges attached with the clones of a vertex are now considered to be attached with the vertex itself.
	\end{enumerate}
	
	The following useful lemma allows us to transfer results related to the configuration model to uniformly drawn simple random graphs. 
	\begin{lem} \label{lem:config}
		If the degree sequence $\{\mathcal{D}(n)\}_{n \in \mathbbm{N}}$
		 satisfies Assumption \ref{assume}, then the probability that the configuration model results in a simple graph is bounded away from zero as $n \rightarrow \infty$.
	\end{lem}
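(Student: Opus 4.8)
The plan is to reduce the simplicity question to a \emph{finite product} of classical, one‑matching configuration‑model estimates. The first observation is that the configuration model described above is assembled from $N' \triangleq |\{(i,j):i\le j,\ \lambda_i^j>0\}|\le p^2$ \emph{mutually independent} uniform matchings: for each unordered pair $\{i,j\}$ with $i<j$ and $\lambda_i^j>0$, a uniform perfect matching between the $n\lambda_i^j(n)$ clones of type $(i,j)$ and the $n\lambda_j^i(n)=n\lambda_i^j(n)$ clones of type $(j,i)$; and for each $i$ with $\lambda_i^i>0$, a uniform perfect matching of the $n\lambda_i^i(n)$ clones of type $(i,i)$ among themselves. (By Assumption~\ref{assume}(d) there are no clones for a pair with $\lambda_i^j=0$, and feasibility guarantees the two sides have equal size, respectively that $n\lambda_i^i(n)$ is even.) The key structural point is that every violation of simplicity is created \emph{inside a single one} of these matchings: a self-loop at $v\in G_i$ uses two clones of type $(i,i)$, while a pair of parallel edges joining $u\in G_i$ and $w\in G_j$ uses two clones of type $(i,j)$ and two of type $(j,i)$, so in either case all the clones involved lie in one matching. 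Hence the multigraph is simple iff each of the $N'$ matchings, taken by itself, produces no self-loop and no repeated edge, and since these events are functions of disjoint, independent sources of randomness,
\begin{align*}
\mathbf{P}(\text{configuration model is simple})=\prod_{\{(i,j):\,i\le j,\ \lambda_i^j>0\}}\mathbf{P}\big(\text{matching }(i,j)\text{ has no loop and no repeated edge}\big).
\end{align*}
This is a product of a fixed finite number of factors, so it suffices to prove each factor stays bounded away from $0$ as $n\to\infty$.

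Fix one such matching; it is exactly a standard configuration model — unipartite when $i=j$, bipartite when $i<j$ — whose clone-degree sequence is given by the coordinates $d_j(v)$ for $v\in G_i$ (and $d_i(w)$ for $w\in G_j$). Let $X_n$ count its self-loops (only possible when $i=j$) and $Y_n$ count its unordered pairs of parallel edges, and set $m(n)\triangleq n\lambda_i^j(n)$ and $\beta_i^{jk}(n)\triangleq\sum_{\mathbf d}d_j d_k\, p_i^{\mathbf d}(n)$. A direct first-moment computation gives, e.g.\ for $i=j$,
\begin{align*}
\mathbf{E}[X_n]=\frac{1}{m(n)-1}\sum_{\mathbf d}\binom{d_i}{2}n_i^{\mathbf d}(n)=\frac{\tfrac n2\big(\beta_i^{ii}(n)-\lambda_i^i(n)\big)}{n\lambda_i^i(n)-1},
\end{align*}
and an analogous formula for $\mathbf{E}[Y_n]$ with a denominator of order $m(n)^2$. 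Because $\lambda_i^j(n)\to\lambda_i^j>0$ (Assumption~\ref{assume}(c)) the denominators are of the right order; and because $d_jd_k\le(\mathbf 1'\mathbf d)^2$, Assumption~\ref{assume}(e) together with Lemma~\ref{ui} — the family $\{(\mathbf 1'\mathbf D_{\mathbf p(n)})^2\}$ is uniformly integrable while $\mathbf D_{\mathbf p(n)}\to\mathbf D_{\mathbf p}$ in distribution — gives $\beta_i^{jk}(n)\to\beta_i^{jk}<\infty$. Hence $\mathbf{E}[X_n]$ and $\mathbf{E}[Y_n]$ converge to finite constants $\mu_1,\mu_2\ge0$.

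To turn bounded expectations into a positive lower bound for $\mathbf{P}(X_n=Y_n=0)$, I would run the classical Poisson-approximation argument. The $(r,s)$-th joint factorial moment of $(X_n,Y_n)$ is computed by the same bookkeeping — pick $r$ pairwise disjoint candidate loops and $s$ pairwise disjoint candidate parallel pairs, multiply the $O(1)$ many factors of the form $1/(m(n)-\ell)$ that force them all to be realized, and sum over choices; the sums factorize asymptotically, so this moment converges to $\mu_1^{\,r}\mu_2^{\,s}$ for all $r,s\ge0$. This step goes through precisely because $\omega(n)=o(\sqrt n)$ (Lemma~\ref{maxdegree}) and the relevant second-moment sums are $O(n)$, so that configurations with overlapping vertices contribute only lower-order terms. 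By the method of moments, $(X_n,Y_n)$ converges in distribution to a pair of independent $\mathrm{Poisson}(\mu_1)$ and $\mathrm{Poisson}(\mu_2)$ random variables; in particular $\mathbf{P}(X_n=Y_n=0)\to e^{-\mu_1-\mu_2}>0$. (Alternatively, one may simply invoke the known simplicity statements for unipartite and bipartite configuration models under a bounded second-moment condition, e.g.\ \cite{Wormald78},\cite{Bollobas},\cite{BenderCanfield}.) Substituting these limits into the product of the first paragraph yields $\mathbf{P}(\text{simple})\to\prod e^{-\mu_1^{(ij)}-\mu_2^{(ij)}}>0$, proving the lemma. The only genuinely multipartite ingredient is the reduction to an independent product over per-pair matchings; the main technical burden — the factorial-moment estimate for a single matching — is routine and entirely classical.
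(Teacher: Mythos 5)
Your argument is correct and rests on the same structural reduction as the paper: decompose the multipartite configuration model into a constant number of mutually independent matchings --- a unipartite matching among the type $(i,i)$ clones for each $i$, and a bipartite matching between type $(i,j)$ and type $(j,i)$ clones for each unordered pair with $i\neq j$ and $\lambda_i^j>0$ --- observe that every self-loop or repeated edge is created entirely inside a single one of these matchings, and thereby reduce simplicity of the whole multigraph to per-matching simplicity, whose probabilities multiply. The difference lies in how each factor is handled. The paper disposes of the diagonal factors by appealing to the standard unipartite result, and of the off-diagonal factors by casting the bipartite matching as half of a directed configuration model and invoking Theorem~4.3 of Chen and Olvera-Cravioto \cite{ChenMariana13}. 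You instead run the classical Poisson factorial-moment computation from scratch for a single matching: with $X_n$ loops and $Y_n$ double-edge pairs, the first moments converge to finite constants because $\lambda_i^j(n)\to\lambda_i^j>0$ controls the denominators and the second moments $\beta_i^{jk}(n)=\sum_{\mathbf d}d_jd_k\,p_i^{\mathbf d}(n)$ converge by Assumption~\ref{assume}(e) together with Lemma~\ref{ui} (domination by $(\mathbf 1'\mathbf d)^2$), while $\omega(n)=o(\sqrt n)$ from Lemma~\ref{maxdegree} kills the overlap corrections in higher factorial moments, yielding a joint Poisson limit and $\mathbf P(X_n=Y_n=0)\to e^{-\mu_1-\mu_2}>0$; you also correctly note that one could instead cite the classical unipartite/bipartite simplicity statements. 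Both routes are sound. Yours is more self-contained, treats the diagonal and off-diagonal matchings uniformly, and makes explicit exactly which parts of Assumption~\ref{assume} do the work, at the cost of reproducing a routine moment computation that the paper avoids by citation.
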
	
	As a consequence of the above lemma, any statement that holds with high probability for the random configuration model is also true with high probability for the simple random graph model. So we only need to 
	prove Theorem \ref{theorem1} and Theorem \ref{theorem2} for the configuration model.
	
	The proof of Lemma \ref{lem:config} can be obtained easily by using a similar result on directed random graphs proved in \cite{ChenMariana13}. The specifics of the proof follow.
	\begin{proof}[Proof of Lemma \ref{lem:config}]
		In the configuration model for multipartite graphs that we described, we can classify all clones into two categories. First, the clones of the kind, 
		$(i,i) \in S$ and the clones of the kind $(i,j) \in S, \ i \neq j$. Since the outcome of the matching associated with each of the cases is independent, we can treat them separately for this proof. 
		For the first category, the problem is equivalent to the case of configuration model for standard unipartite graphs. More 
		precisely, for a fixed $i$, we can construct a standard degree distribution $\tilde{\mathcal{D}}(n)$ from $\mathcal{D}(n)$ by taking the $i^{th}$ component of the corresponding vector degrees of the latter.
		By using Assumptions \ref{assume}, our proof then follows from previous results for unipartite case.
		
		For the second category, first let us fix $(i,j)$ with $i \neq j$. 
		Construct a degree distribution $\mathcal{D}_1(n) = (n^k(n), \ k \in  [n])$ where $n^k(n)$ denotes the number of vertices of degree $k$ by letting $n^k(n) = \sum_{\mathbf{d}} \mathbf{1}\{d(j) = k \} n_i^{\mathbf{d}}$.
		Construct $\mathcal{D}_2(n)$ similar to $\mathcal{D}_1(n)$ by interchanging $i$ and $j$. We consider a bipartite graph where degree distribution of the vertices in part $i$
		 is given by $\mathcal{D}_i(n)$ for $i = 1,2$. We form the corresponding configuration model and perform the usual uniform
		matching between the clones generated from  $\mathcal{D}_1(n)$ with the clones generated from
		$\mathcal{D}_2(n)$. This exactly mimics the outcome of matching that occurs in our original multipartite configuration model between clones of type $(i,j)$ and $(j,i)$. 
		With this formulation, the problem of controlling number of double edges is very closely related to a similar problem concerning the configuration model for directed random graphs which was studied in 
		\cite{ChenMariana13}. To precisely match their setting, add ``dummy" vertices with zero degree to both $\mathcal{D}_1(n)$ and $\mathcal{D}_2(n)$ so that they have exactly $n$ vertices each
		and then arbitrarily enumerate the vertices in each with indices from $[n]$.
		From Assumption \ref{assume} it can be easily verified that the degree distributions $\mathcal{D}_1(n)$ and $\mathcal{D}_2(n)$ satisfy Condition 4.2  in \cite{ChenMariana13}. To switch between our notation
		and theirs, use 
		$\mathcal{D}_1(n) \rightarrow M^{[n]}$ and $\mathcal{D}_2(n) \rightarrow D^{[n]}$. Then Theorem 4.3 in \cite{ChenMariana13} says that the probability of having no self loops and double edges is bounded
		away from zero. In particular, observing that self loops are irrelevant in our case, we conclude that $\lim_{n \rightarrow \infty} \mathbf{P}(\mbox{No double edges}) > 0$. Since the number of pairs $(i,j)$ is
		less than or equal to $p(p-1)$ which is a constant with respect to $n$, the proof is now complete.
	\end{proof}
\end{section}

\begin{section}{Exploration Process} \label{sec:exploration}
	In this section we describe the exploration process which was introduced by Molloy and Reed in \cite{MolloyReed1} to reveal the component associated with a given vertex in the random graph.
	We say a clone is of type $(i,j)$ if it belongs to a vertex in $G_i$ and has its neighbor in $G_j$. We say a vertex is of type $(i,\mathbf{d})$ if it belongs to $G_i$ and has degree type $\mathbf{d}$.
	 We start at time $k = 0$. At any point in time $k$ in the exploration process, there are three kinds of clones - `sleeping' clones , `active' clones and `dead' clones.
	 For each $(i,j) \in S$, the number of active clones of type $(i,j)$
	  at time $k$ are denoted by $A_i^ j(k)$ and the total number of active clones at time $k$ is given by
	 $A(k) = \sum_{(i, j) \in S}{A_i^j} (k)$. Two clones are said to be ``siblings" if they belong to the same vertex.
	The set of sleeping and awake clones are collectively called `living' clones. We denote by $L_i(k)$ the number of living clones in $G_i$ and $L_i^j(k)$ to be the number of living clones of type $(i,j)$ at time $k$.
	It follows that $\sum_{j \in  [p]} L_i^j(k) = L_i(k)$. 
	 If all clones of a vertex are sleeping then the vertex is said to be a sleeping vertex, if all its clones are
	dead, then the vertex is considered dead, otherwise it is considered to be active. At the beginning of the exploration process all clones (vertices) are sleeping. We denote the number of sleeping vertices 
	in $G_i$ of type $\mathbf{d}$ at time $k$
	by $N_{i}^{\mathbf{d}}(k)$ and let $N_S(k) = \sum_{i, \mathbf{d}} N_i^{\mathbf{d}}(k)$. Thus $N_{i}^{\mathbf{d}}(0) = n_i^{\mathbf{d}}(n)$ and $N_S(0) = n$.
	We now describe the exploration process used to reveal the components of the configuration model. \\ 
	
	\vspace{0.05in}
	\hspace{-0.1in} \textbf{Exploration Process.}
	\begin{itemize}
		\item[1.] \emph{Initialization}: Pick a vertex uniformly at random from the set of all sleeping vertices and and set the status 
	of all its clones to active.
	  \item[2.] Repeat the following two steps as long as there are active clones:
	\begin{itemize}
		\item[(a).] Pick a clone uniformly at random from the set of active clones and kill it.
		\item[(b).] Reveal the neighbor of the clone by picking uniformly at random one of its candidate neighbors. Kill the neighboring clone and make its siblings active.
	\end{itemize}
	\item[3.] If there are alive clones left, restart the process by picking an alive clone uniformly at random and setting all its siblings to active, and go back to step 2.
	 If there are no alive clones, the exploration process is complete.
	\end{itemize}
	\vspace{0.05in}
	 
	 Note that in step 2(b), the candidate neighbors of a clones of type $(i,j)$ are the set of alive clones of type $(j,i)$.
	 
	The exploration process enables us to conveniently track the evolution in time of the number of active clones of various types. We denote the change in  $A_i^j(k)$ by writing
	\begin{align*}
		A_i^j(k+1) = A_i^j(k) + Z_i^j(k+1), \quad  (i,j) \in S.
	\end{align*}
	Define $\mathbf{Z}(k) \triangleq \left( Z_i^j(k) , \ (i,j) \in S \right)$ to be the vector of changes in the number of active clones of all types.
	 To describe the probability distribution of the changes $Z_i^j(k+1)$, we consider the following two cases.
	 \begin{itemize}
	\item [\textit{Case 1:}]  $A(k) > 0$. \\ 
	Let $E_i^j$ denote the event that in step $2$-(a) of the exploration process, the active clone picked was of type $(i,j)$. The probability of this event is  $\frac{A_i^j(k)}{A(k)}$.
	In that case we kill the clone that we chose and the number of active clones of type $(i,j)$ reduces by one.
	Then we proceed to reveal its neighbor which of type $(j,i)$.
	One of the following events happen:
	\begin{itemize}
		\item[(i).]	$E_a$: the neighbor revealed is an active clone. The probability of the joint event is given by
				\begin{align*}
					\mathbf{P}(E_i^j \cap E_a) = \begin{cases} \frac{A_i^j(k)}{A(k)} \frac{A_j^i(k)}{L_j^i(k)} \ & \mbox{if } i \neq j, \\
					  \frac{A_i^i(k)}{A(k)} \frac{A_i^i(k) - 1}{L_i^i(k)- 1} \ & \mbox{if } i= j. \end{cases}
				\end{align*}	
				Such an edge is referred to as a back-edge in \cite{MolloyReed1}. 
				The change in active clones of different types in this joint event is as follows.
				\begin{itemize} 
					\item[-] If $i \neq j$,
					\begin{align*}
						Z_i^j(k+1) &=  Z_j^i(k+1)  = -1, \\
						Z_l^m(k+1) &= 0, \quad \mbox{otherwise }.
					\end{align*}
					\item[-] If $i = j$,
					\begin{align*}
						Z_i^i(k+1) &= -2, \\
						Z_l^m(k+1) &= 0, \quad \mbox{otherwise }.
					\end{align*}
				\end{itemize}
		
		\item[(ii).]  $E_s^{\mathbf{d}}$: The neighbor revealed is a sleeping clone of type $\mathbf{d}$.
				The probability of this joint event is given by
				\begin{align*}
					\mathbf{P}(E_i^j \cap E_s^{\mathbf{d}}) =  \frac{A_i^j(k)}{A(k)} \frac{d_i N_j^{\mathbf{d}}(k)}{L_j^i(k) - \delta_{ij}}.
				\end{align*}	
				The sleeping vertex to which the neighbor clone belongs is now active. The change in the number of active clones of different types is governed by the type $\mathbf{d}$ of this new active vertex.
				The change in active clones of different types in this event are as follows.
				\begin{itemize} 
					\item[-] If $i \neq j$,
					\begin{align*}
						Z_i^j(k+1) &= -1, \\
						Z_j^m(k+1) &= d_m - \delta_{im}, \\
						Z_l^m(k+1) &= 0, \quad \mbox{otherwise}.
					\end{align*}
					\item[-] If $i = j$,
					\begin{align*}
						Z_i^i(k+1) &=  -2 + d_i,  \\
						Z_i^m(k+1) &= d_m,  \ \mbox{for } m \neq i,\\
						Z_l^m(k+1) &= 0,   \ \mbox{otherwise }.
					\end{align*}
				\end{itemize}
				
	\end{itemize}

	Note that the above events are exhaustive, i.e., 
	\begin{align*}	
		\sum_{i,j \in S}  \sum_{\mathbf{d}} \mathbf{P}(E_i^j \cap E_s^{\mathbf{d}}) + \sum_{i,j \in S} \mathbf{P}(E_i^j \cap E_a) = 1.
	\end{align*}
	
		
	\item [\textit{Case 2:}] $A(k) = 0$. \\
	In this case, we choose a sleeping clone at random and make it and all its siblings active.
	Let $E_i^j$ be the event that the sleeping clone chosen was of type $(i,j)$. Further let $E^{\mathbf{d}}$ be the event that this clone belongs to a vertex of type $(i, \mathbf{d})$.
	Then we have 
	\begin{align*}
		\mathbf{P}(E_i^j \cap E^{\mathbf{d}}) = \frac{L_i^j(k)}{L(k)} \frac{d_j N_i^{\mathbf{d}}(k)}{L_i^j(k)} = \frac{d_j N_i^{\mathbf{d}}(k)}{L(k)}.
	\end{align*}
	In this case the change in the number of active clones of different types is given by
	\begin{align*}
		Z_i^m(k+1) &= d_m, \ \mbox{for } m \in S_i, \\
		Z_{i'}^{m'}(k+1) &= 0, \ \mbox{otherwise}.
	\end{align*}
	\end{itemize}
	We emphasize here that there are two ways in which the evolution of the exploration process deviates from that of the edge-biased branching process. First, a back-edge can occur in the exploration process
	when neighbor of an active clone is revealed to be another active clone. Second, the degree distribution of the exploration process is time dependent. 
	However, close to the beginning of the process, these two events do not have a significant impact. We exploit this fact in the following sections to prove Theorem \ref{theorem1} and \ref{theorem2}.
\end{section}

\begin{section}{Supercritical Case} \label{sec:existence}
	In this section we prove the first part of Theorem \ref{theorem1}. To do this we show that the number of active clones in the exploration process grows to a linear size with high probability.
	 Using this fact, we then prove the existence of a giant component. The idea behind the proof is as follows.
	We start the exploration process described in the previous section at an arbitrary vertex $v \in \mathcal{G}$. At the beginning of the exploration process, i.e. at $k=0$ , we 
	have $N_j^{\mathbf{d}}(0) = n p_j^{\mathbf{d}}(n)$ and $L_i^j(0) = n \lambda_i^j(n)$. So, close to the beginning of the exploration, a clone of type $(i,j)$ gives rise to
	$d_m - \delta_{im}$ clones of type $(j,m)$ with probability close to $\frac{d_i p_j^{\mathbf{d}}(n)}{\lambda_j^i(n)}$ which in turn is close to $\frac{d_i p_j^{\mathbf{d}}}{\lambda_j^i}$ for large enough $n$.
	 If we consider the exploration process in a very small linear time scale, i.e. for $k <  \epsilon n$ for small enough $\epsilon$, then the quantities $\frac{d_i N_j^{\mathbf{d}}(k)}{L_j^i(k) - \delta_{ij}}$
	  remain close to $\frac{d_i p_j^{\mathbf{d}}}{\lambda_j}$ and the quantities $\frac{A_j^i(k)}{L_j^i(k) - \delta_{ij}}$ are negligible. We use this observation to construct a process which underestimates the exploration process in some appropriate sense but 
	 whose parameters are time invariant and ``close" to the initial degree distribution. We then use this somewhat easier to analyze process to prove our result. 
	 
	We now get into the specific details of the proof. 
	We define a stochastic process $B_i^j(k)$ which we will couple with $A_i^j(k)$ such that $B_i^j(k)$ underestimates $A_i^j(k)$ with probability one. We denote the evolution in time of $B_i^j(k)$ by 
	\begin{align*}
		B_i^j(k+1) = B_i^j(k) + \hat{Z}_i^j(k+1), \quad  (i,j) \in S.
	\end{align*}
	To define  $\hat{Z}_i^j(k+1)$, we choose quantities $\pi_{ji}^{\mathbf{d}}$ satisfying
	\begin{align}
		& 0 \leq \pi_{ji}^{\mathbf{d}} < \frac{d_i p_j^{\mathbf{d}}}{\lambda_j^i} , \ \mbox{  } p_j^{\mathbf{d}} > 0, \label{underestimate} \\
		& \sum_{\mathbf{d}} \pi_{ji}^{\mathbf{d}} = 1 - \gamma, \label{undersum}
	\end{align}
	for some $0 < \gamma < 1$ to be chosen later. 
	
	We now show that in a small time frame, the parameters associated with the exploration process do not change significantly from their initial values. This is made precise in Lemma \ref{perturbation} 
	and Lemma \ref{perturbation1} below.
	Before that we first introduce some useful notation to describe these parameters for a given $n$ and at a given step $k$ in the exploration process.
	Let $M(n)$ denote the matrix of means defined analogous to $M$ by replacing $ \frac{d_i p_j^{\mathbf{d}}}{\lambda_j^i}$ by $ \frac{d_i p_j^{\mathbf{d}}(n)}{\lambda_j^i(n)}$.
	Also for a fixed $n$, define $M_k(n)$ similarly by replacing $ \frac{d_i p_j^{\mathbf{d}}}{\lambda_j^i}$ by $\frac{d_i N_j^{\mathbf{d}}(k)}{L_j^i(k) - \delta_{ij}}$. Note that $M_0(n) = M(n)$. Also from
	Assumption \ref{assume} it follows that $\frac{d_i p_j^{\mathbf{d}}(n)}{\lambda_i^j(n)} \rightarrow \frac{d_i p_j^{\mathbf{d}}}{\lambda_i^j}$ and that $M(n) \rightarrow M$. 
	
	\begin{lem} \label{perturbation}
		Given $\delta> 0$, there exists $\epsilon > 0$ and some integer $\hat{n}$ such that for all $n \geq \hat{n}$ and for all time steps $k \leq \epsilon n$ in the exploration process we have 
		$\sum_{\mathbf{d}} \left|\frac{d_i N_j^{\mathbf{d}}(k)}{L_i^j(k) - \delta_{ij}}  - \frac{d_i p_j^{\mathbf{d}}}{\lambda_i^j} \right| < \delta$.
	\end{lem}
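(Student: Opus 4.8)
The plan is to control how far the empirical quantities $N_j^{\mathbf{d}}(k)$ and $L_i^j(k)$ can drift from their initial values $N_j^{\mathbf{d}}(0) = n p_j^{\mathbf{d}}(n)$ and $L_i^j(0) = n\lambda_i^j(n)$ during the first $\epsilon n$ steps, and then to split the infinite sum over $\mathbf{d}$ into a finite ``head'' and a ``tail''. We may fix a single pair $(i,j) \in S$, so that $\lambda_i^j>0$; there are finitely many such pairs, so it suffices to treat one and afterwards take the smallest $\epsilon$ and the largest $\hat{n}$. The drift estimate is elementary: isolated vertices are never activated by the exploration process, and each of the first $k$ steps reveals at most one edge and kills at most two living clones, so the number of vertices activated by time $k$ is $O(k)$ and $|L_i^j(k)-L_i^j(0)| = O(k)$. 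Consequently, for $k\le\epsilon n$ we get $N_j^{\mathbf{d}}(k) = n p_j^{\mathbf{d}}(n) - O(\epsilon n)$ for every $\mathbf{d}$ and $L_i^j(k) = n\lambda_i^j(n) - O(\epsilon n)$; since $\lambda_i^j(n)\to\lambda_i^j>0$, there is a constant $c_1>0$ with $L_i^j(k)-\delta_{ij}\ge c_1 n$ for all $n\ge\hat{n}$ and all $k\le\epsilon n$, once $\epsilon$ is small.

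For the tail, fix a threshold $q$ and estimate $\sum_{\mathbf{d}:\,\mathbf{1}'\mathbf{d}>q}\frac{d_i N_j^{\mathbf{d}}(k)}{L_i^j(k)-\delta_{ij}}$. Using $N_j^{\mathbf{d}}(k)\le n p_j^{\mathbf{d}}(n)$, the bound $L_i^j(k)-\delta_{ij}\ge c_1 n$, and $d_i\le\mathbf{1}'\mathbf{d}$, this is at most $c_1^{-1}\sum_{\mathbf{d}:\,\mathbf{1}'\mathbf{d}>q}(\mathbf{1}'\mathbf{d})\,p_j^{\mathbf{d}}(n) \le c_1^{-1}\,\mathbf{E}\!\left[\mathbf{1}'\mathbf{D}_{\mathbf{p}(n)}\,\mathbf{1}_{\{\mathbf{1}'\mathbf{D}>q\}}\right]$, which by the uniform integrability of $\{\mathbf{1}'\mathbf{D}_{\mathbf{p}(n)}\}$ from Lemma \ref{ui} can be made smaller than $\delta/3$ by choosing $q$ large, uniformly in $n$. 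The corresponding tail of the limiting sum is at most $(\lambda_i^j)^{-1}\sum_{\mathbf{d}:\,\mathbf{1}'\mathbf{d}>q}(\mathbf{1}'\mathbf{d})\,p_j^{\mathbf{d}} = (\lambda_i^j)^{-1}\,\mathbf{E}\!\left[\mathbf{1}'\mathbf{D}_{\mathbf{p}}\,\mathbf{1}_{\{\mathbf{1}'\mathbf{D}>q\}}\right]$, which is likewise $<\delta/3$ for $q$ large because $\mathbf{1}'\mathbf{D}_{\mathbf{p}}$ is integrable. Thus the two tails together are below $2\delta/3$, and $q=q(\delta)$ is now fixed.

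It remains to handle the head, which has only finitely many terms, at most the number $R(q,p)$ of vectors $\mathbf{d}\in\mathbbm{Z}_+^p$ with $\mathbf{1}'\mathbf{d}\le q$. For each such $\mathbf{d}$ the drift bounds give $N_j^{\mathbf{d}}(k)/n = p_j^{\mathbf{d}}(n)+O(\epsilon)$ and $(L_i^j(k)-\delta_{ij})/n = \lambda_i^j(n)+O(\epsilon)$; combining this with $p_j^{\mathbf{d}}(n)\to p_j^{\mathbf{d}}$, $\lambda_i^j(n)\to\lambda_i^j>0$, $d_i\le q$, and the elementary inequality $|a/b-a^*/b^*|\le(|a-a^*|b^*+a^*|b^*-b|)/(bb^*)$, each summand $\left|\frac{d_i N_j^{\mathbf{d}}(k)}{L_i^j(k)-\delta_{ij}}-\frac{d_i p_j^{\mathbf{d}}}{\lambda_i^j}\right|$ is at most $C(\lambda_i^j)\,q\,\epsilon$ once $n\ge\hat{n}$. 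Now choose $\epsilon$ small enough that $R(q,p)\,C(\lambda_i^j)\,q\,\epsilon<\delta/3$ (and small enough to validate the denominator lower bound), and then $\hat{n}$ large enough that the finitely many convergences $p_j^{\mathbf{d}}(n)\to p_j^{\mathbf{d}}$ and $\lambda_i^j(n)\to\lambda_i^j$ in the head each hold to within $\epsilon$, as does the tail estimate. The head then contributes less than $\delta/3$, and the total sum is below $\delta$, as required.

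The only genuinely non-routine ingredient is the tail: the mere convergence $\mathbf{p}(n)\to\mathbf{p}$ does not control $\sum_{\mathbf{d}:\,\mathbf{1}'\mathbf{d}>q} d_i\,p_j^{\mathbf{d}}(n)$ uniformly in $n$, and supplying that uniformity is exactly the role of the uniform integrability established in Lemma \ref{ui} (equivalently, of Assumption \ref{assume}(c),(e)); this is also why the hypothesis of a finite limiting second moment enters the statement. Everything else — the per-step drift bound and the finite perturbation estimate — is bookkeeping, in the same spirit as the one-dimensional estimates of Molloy and Reed, and the argument runs identically for every $(i,j)\in S$ simultaneously.
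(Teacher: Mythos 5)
Your proposal is correct and follows essentially the same route as the paper's own proof: a per-step drift bound giving $N_j^{\mathbf{d}}(k)/n$ and $L_i^j(k)/n$ within $O(\epsilon)$ of their initial values, a head/tail split of the sum over $\mathbf{d}$ at a threshold $q$, uniform integrability (Lemma \ref{ui}) to kill the tail uniformly in $n$, and convergence of $\mathbf{p}(n)\to\mathbf{p}$ to finish. The only presentational difference is that you split the error into thirds and make the denominator lower bound $L_i^j(k)-\delta_{ij}\ge c_1 n$ explicit, whereas the paper first compares the time-$k$ quantities to the $\mathbf{p}(n)$-quantities and then the $\mathbf{p}(n)$-quantities to the limit, but these are cosmetic.
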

	\begin{proof}
		Fix $\epsilon_1 > 0$.  From Lemma \ref{ui} we have that that random variables $\mathbf{1}' \mathbf{D}_{\mathbf{p}(n)}$ are uniformly integrable. Then there exists $q \in \mathbbm{Z}$ such that 
		for all $n$ we have
		$\sum_{\mathbf{d}} d_i p_j^{\mathbf{d}}(n) \mathbf{1}_{\{ \mathbf{1}' \mathbf{d} > q\}} < \epsilon_1$. Since $0 \leq \frac{N_j^{\mathbf{d}}(k)}{n} \leq \frac{N_j^{\mathbf{d}}(0)}{n} = p_j^{\mathbf{d}}(n)$, we have
		$\sum_{\mathbf{d}} \mathbf{1}_{\{ \mathbf{1}'\mathbf{d} > q\}} \left| d_i p_j^{\mathbf{d}}(n) - d_i \frac{N_j^{\mathbf{d}}(k)}{n} \right| < \epsilon_1$.
		 For each time step $k \leq \epsilon n$ in the exploration process we have $\frac{N_j^{\mathbf{d}}(k)}{n} \geq \frac{N_j^{\mathbf{d}}(0)}{n} - \epsilon$.
		 So for small enough $\epsilon$, we can make
		 $\sum_{\mathbf{d}} \mathbf{1}_{\{ \mathbf{1}'\mathbf{d} \leq q\}} \left| d_i \frac{N_j^{\mathbf{d}}(k)}{n}  - d_i p_j^{\mathbf{d}}(n) \right| < \epsilon_1$.
		 Additionally, $L_i^j(k)$ can change by at most two at each step. So $|\frac{L_i^j(k) - \delta_{ij}}{n} - \lambda_i^j(n)| \leq 2 \epsilon$. So for small enough $\epsilon$, for every $(i,j) \in S$ we have
		 $\frac{n}{L_i^j(k) - \delta_{ij}} - \frac{1}{\lambda_i^j(n)} < \epsilon_1$.
		 Now we can bound
		 \begin{align} \label{b0}
		 	\sum_{\mathbf{d}} \mathbf{1}_{\{ \mathbf{1}'\mathbf{d} > q\}} & \left|\frac{d_i N_j^{\mathbf{d}}(k)}{L_i^j(k) - \delta_{ij}}  - \frac{d_i p_j^{\mathbf{d}}(n)}{\lambda_i^j(n)} \right| \\ \notag
			&\leq \sum_{\mathbf{d}} \mathbf{1}_{\{ \mathbf{1}'\mathbf{d} > q\}} \left( \left|\frac{d_i N_j^{\mathbf{d}}(k)}{L_i^j(k) - \delta_{ij}}  - \frac{d_i N_j^{\mathbf{d}}(k)}{n \lambda_i^j(n)} \right|
			+  \left|\frac{d_i N_j^{\mathbf{d}}(k)}{n \lambda_i^j(n)}  - \frac{d_i p_j^{\mathbf{d}}(n)}{\lambda_i^j(n)} \right| \right) \\ \notag
			& \leq \sum_{\mathbf{d}} \mathbf{1}_{\{ \mathbf{1}'\mathbf{d} > q\}} \frac{d_i N_j^{\mathbf{d}}(k)}{n} \epsilon_1 + \frac{\epsilon_1}{\lambda_i^j(n)} \\ \notag
			&\leq \delta/4, \notag
		 \end{align}
		 where the last inequality can be obtained by choosing small enough $\epsilon_1$.
		 Since $q$ is a constant, by choosing small enough $\epsilon$ we can ensure that
		 $\sum_{\mathbf{d}} \mathbf{1}_{\{ \mathbf{1}'\mathbf{d} \leq q\}} \left|\frac{d_i N_j^{\mathbf{d}}(k)}{L_i^j(k) - \delta_{ij}}  - \frac{d_i p_j^{\mathbf{d}}(n)}{\lambda_i^j(n)} \right|
		 \leq \delta/4$. Additionally from Assumption \ref{assume}, for large enough $n$ we have
		 $\sum_{\mathbf{d}} \left|\frac{d_i p_j^{\mathbf{d}}(n)}{\lambda_i^j(n)} - \frac{d_i p_j^{\mathbf{d}}}{\lambda_i^j} \right| < \delta/2$.
		The lemma follows by combining the above inequalities.		
	\end{proof}
	
	\begin{lem} \label{perturbation1}
          	Given $\delta> 0$, there exists $\epsilon > 0$ and some integer $\hat{n}$ such that for all $n \geq \hat{n}$ and for all time steps $k \leq \epsilon n$ in the exploration process we have 
		$||M_k(n) - M|| \leq \delta$.
	\end{lem}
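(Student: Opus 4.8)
Since $\|\cdot\|$ is the entrywise maximum and, by the convention $\mu_{ijlm}=0$ for $l\neq j$, the off--block entries of both $M_k(n)$ and $M$ vanish, it suffices to bound, uniformly over the finitely many choices of $(i,j)\in S$ and $m\in[p]$, the quantity
\[
\left| [M_k(n)]_{(i,j),(j,m)} - \mu_{ijjm} \right| = \left| \sum_{\mathbf{d}} (d_m - \delta_{im}) \left( \frac{d_i N_j^{\mathbf{d}}(k)}{L_j^i(k) - \delta_{ij}} - \frac{d_i p_j^{\mathbf{d}}}{\lambda_i^j} \right) \right|.
\]
The only new feature compared with Lemma~\ref{perturbation} is the coefficient $(d_m-\delta_{im})$, which is unbounded; this is precisely where Assumption~\ref{assume}(e) (finite second moment of the degree) enters. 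The plan is to split the sum at a threshold $q=q(\delta)$ into a head $\{\mathbf{1}'\mathbf{d}\le q\}$ and a tail $\{\mathbf{1}'\mathbf{d}>q\}$, bound the tail using the second moment, and reduce the head to Lemma~\ref{perturbation}.

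For the tail I would use $0\le d_m-\delta_{im}\le \mathbf{1}'\mathbf{d}$ and $d_i\le\mathbf{1}'\mathbf{d}$, so that $(d_m-\delta_{im})d_i\le(\mathbf{1}'\mathbf{d})^2$. By Lemma~\ref{ui} the family $\{(\mathbf{1}'\mathbf{D}_{\mathbf{p}(n)})^2\}$ is uniformly integrable, so one can pick $q$ so that $\sum_{\mathbf{1}'\mathbf{d}>q}(\mathbf{1}'\mathbf{d})^2 p_j^{\mathbf{d}}(n)<\epsilon_1$ for all $n$ and, using finiteness of the limiting second moment, also $\sum_{\mathbf{1}'\mathbf{d}>q}(\mathbf{1}'\mathbf{d})^2 p_j^{\mathbf{d}}<\epsilon_1$. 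Since $N_j^{\mathbf{d}}(k)$ is nonincreasing in $k$, $N_j^{\mathbf{d}}(k)\le N_j^{\mathbf{d}}(0)=n\,p_j^{\mathbf{d}}(n)$; and since $(i,j)\in S$ forces $\lambda_i^j=\lambda_j^i>0$, with $\lambda_j^i(n)\to\lambda_j^i$ (the discussion after Assumption~\ref{assume}), the denominator satisfies $L_j^i(k)-\delta_{ij}\ge c\,n$ for a positive constant $c$ once $\epsilon$ is small (because $L_j^i$ changes by at most a bounded amount per step) and $n$ large. Hence on the tail $\frac{d_i N_j^{\mathbf{d}}(k)}{L_j^i(k)-\delta_{ij}}\le c^{-1}(\mathbf{1}'\mathbf{d})^2$, and bounding the two tail sums separately gives a contribution at most $C\epsilon_1$, which is $\le\delta/2$ for $\epsilon_1$ chosen small (depending only on $\delta$).

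With $q$ now fixed in terms of $\delta$, the head contribution is at most $q\sum_{\mathbf{d}}\left|\frac{d_i N_j^{\mathbf{d}}(k)}{L_j^i(k)-\delta_{ij}}-\frac{d_i p_j^{\mathbf{d}}}{\lambda_j^i}\right|$, and I would apply Lemma~\ref{perturbation} (with the roles of $i$ and $j$ interchanged) with its parameter $\delta$ replaced by $\delta/(2q)$ to obtain $\epsilon$ and $\hat n$ making this sum at most $\delta/(2q)$, hence the head at most $\delta/2$. Taking the minimum of the $\epsilon$'s and the maximum of the $\hat n$'s over the finitely many pairs $(i,j)\in S$, $m\in[p]$, and adding the head and tail bounds yields $\|M_k(n)-M\|\le\delta$ for all $n\ge\hat n$ and all $k\le\epsilon n$. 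I expect the tail estimate to be the only real obstacle: it is where the second-moment assumption and the uniform lower bound on $L_j^i(k)/n$ (via $\lambda_j^i>0$) must be combined; the head is a routine reduction to the already-proved Lemma~\ref{perturbation}.
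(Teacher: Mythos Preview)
Your proposal is correct and follows essentially the same route as the paper: a head/tail split at a threshold $q$, with the tail controlled via uniform integrability of $(\mathbf{1}'\mathbf{D}_{\mathbf{p}(n)})^2$ (this is exactly where Assumption~\ref{assume}(e) is used, as you say) and the head handled by the $\ell^1$ estimate of Lemma~\ref{perturbation}. The paper organizes the bookkeeping slightly differently---it first bounds $\|M_k(n)-M(n)\|$ (repeating the Lemma~\ref{perturbation} argument inline) and then appeals to $M(n)\to M$---whereas you compare directly to $M$ and invoke Lemma~\ref{perturbation} as a black box for the head; your packaging is a bit cleaner but the content is the same. One small slip: the displayed bound ``$\frac{d_i N_j^{\mathbf{d}}(k)}{L_j^i(k)-\delta_{ij}}\le c^{-1}(\mathbf{1}'\mathbf{d})^2$'' is missing the factor $p_j^{\mathbf{d}}(n)$ (and really should be stated for the full product with $(d_m-\delta_{im})$); with that factor restored the tail sum is indeed $\le c^{-1}\epsilon_1$ as you claim.
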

	\begin{proof}
		The argument is very similar to the proof of Lemma \ref{perturbation}. Fix $\epsilon_1 > 0$.  From Lemma \ref{ui} we know that the random variables $(\mathbf{1}' \mathbf{D}_{\mathbf{p}(n)})^2$
		 are uniformly integrable.
		It follows that there exists $q \in \mathbbm{Z}$ such that for all $n$, we have $\mathbf{E}[(\mathbf{1}' \mathcal{D}(n))^2  \mathbf{1}_{\{(\mathbf{1}' \mathcal{D}(n)) > q\}}] \leq \epsilon_1$.
		From this we can conclude that for all $i,j,m$ we have $\sum_{\mathbf{d}} (d_m - \delta_{im}) d_i p_j^{\mathbf{d}}(n)  \mathbf{1}_{\{\mathbf{1}' \mathbf{d} > q\}}\leq \epsilon_1$.
		Since $\frac{N_j^{\mathbf{d}}(0)}{n} - \epsilon \leq \frac{N_j^{\mathbf{d}}(k)}{n} \leq \frac{N_j^{\mathbf{d}}(0)}{n} = p_j^{\mathbf{d}}(n)$, we have
		\begin{align} \label{b1}
			|\sum_{\mathbf{d}} (d_m - \delta_{im}) d_i p_j^{\mathbf{d}}(n)  \mathbf{1}_{\{\mathbf{1}' \mathbf{d} > q\}} -
			 \sum_{\mathbf{d}} (d_m - \delta_{im}) \frac{d_i N_j^{\mathbf{d}}(n)}{n}  \mathbf{1}_{\{\mathbf{1}' \mathbf{d} > q\}} | \leq \epsilon_1.
		\end{align}	 
		Also $L_i^j(k)$ can change by at most $2 \epsilon n $. So, for small enough $\epsilon$, by an argument similar to the proof of Lemma \ref{perturbation}, we can prove analogous to (\ref{b0}) that 
		 \begin{align} \label{b2}
		  	\left| \sum_{\mathbf{d}} \mathbf{1}_{\{\mathbf{1}'\mathbf{d} > q\}} (d_m - \delta_{im}) \frac{d_i N_j^{\mathbf{d}}(k)}{L_i^j(k) - \delta_{ij}} 
			- \sum_{\mathbf{d}} \mathbf{1}_{\{\mathbf{1}'\mathbf{d} > q\}} (d_m - \delta_{im}) \frac{d_i p_j^{\mathbf{d}}(n)}{\lambda_i^j(n)} \right| \leq \frac{\delta}{4}.
		  \end{align}
		By choosing $\epsilon$ small enough, we can also ensure
		\begin{align} \label{b3}
			 \left|  \sum_{\mathbf{d}} \mathbf{1}_{\{\mathbf{1}'\mathbf{d} \leq q\}} (d_m - \delta_{im}) \frac{d_i N_j^{\mathbf{d}}(k)}{L_i^j(k) - \delta_{ij}} 
			  - \sum_{\mathbf{d}} \mathbf{1}_{\{\mathbf{1}'\mathbf{d} \leq q\}} (d_m - \delta_{im}) \frac{d_i p_j^{\mathbf{d}}(n)}{\lambda_i^j(n)} \right| \leq \frac{\delta}{4}.
		\end{align}	  
		 
		Since $M(n)$ converges to  $M$ we can choose $\hat{n}$ such that $||M(n) - M || \leq \frac{\delta}{2}$. By combining the last two inequalities, the proof is complete.
	\end{proof}
		
	\begin{lem} \label{valid}
		Given any $0 < \gamma < 1$, there exists $\epsilon > 0$, an integer $\hat{n} \in \mathbbm{Z}$ and quantities $\pi_{ij}^{\mathbf{d}}$ satisfying (\ref{underestimate}) and (\ref{undersum}) and the following conditions
		for all $n \geq \hat{n}$:
		\begin{itemize}
			\item[(a)] For each time step $k \leq \epsilon n$,
				\begin{align}
					\pi_{ji}^{\mathbf{d}} < \frac{d_i N_j^{\mathbf{d}}(k)}{L_j^i(k) - \delta_{ij}}, \label{coordinateunderestimate}
				\end{align}
			for each $(i,j) \in S$.
			\item[(b)] The matrix $\hat{M}$ defined analogous to $M$ by replacing $\frac{d_i p_j^{\mathbf{d}}}{\lambda_i^j}$ by $\pi_{ji}^{\mathbf{d}}$ in (\ref{def:mu}) satisfies
				\begin{align}
					||\hat{M} - M || \leq err(\gamma),	\label{muunderestimate}
				\end{align}
				where $err(\gamma)$ is a term that satisfies $\lim_{\gamma \rightarrow 0} err(\gamma) = 0$.
		\end{itemize}
	\end{lem}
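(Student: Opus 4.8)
Here is how I would approach the proof of Lemma~\ref{valid}.

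The plan is to build the quantities $\pi$ out of the \emph{limiting} edge-biased distribution and then truncate them to a finite window. For each $(i,j)\in S$ I would write $\rho_{ji}^{\mathbf{d}}\triangleq d_i p_j^{\mathbf{d}}/\lambda_j^i$; since $\sum_{\mathbf{d}} d_i p_j^{\mathbf{d}}=\lambda_j^i$, this is a probability distribution in $\mathbf{d}$, supported on $\{\mathbf{d}:d_i\ge 1,\ p_j^{\mathbf{d}}>0\}$, and conditions (\ref{underestimate})--(\ref{undersum}) ask for a sub-distribution strictly dominated by $\rho_{ji}$ coordinatewise with total mass $1-\gamma$. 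The key structural point is that (\ref{coordinateunderestimate}) is a \emph{pathwise} requirement: it must hold for every realization of the exploration and every $k\le\epsilon n$. Consequently it cannot be maintained for a type $\mathbf{d}$ whose initial count $N_j^{\mathbf{d}}(0)=np_j^{\mathbf{d}}(n)$ is only $o(n)$, since an adversarial trajectory could exhaust all such vertices inside the linear window; this forces $\pi_{ji}$ to have \emph{finite support}. So I would fix a threshold $q$ (to be chosen as a function of $\gamma$), put $t_{ji}(q)\triangleq\sum_{\mathbf{1}'\mathbf{d}>q}\rho_{ji}^{\mathbf{d}}$, and set $\pi_{ji}^{\mathbf{d}}\triangleq\frac{1-\gamma}{1-t_{ji}(q)}\rho_{ji}^{\mathbf{d}}$ if $\mathbf{1}'\mathbf{d}\le q$ and $\pi_{ji}^{\mathbf{d}}\triangleq 0$ otherwise.

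The choice of $q$ comes from Assumption~\ref{assume}(e). Since the limiting second moment $\sum_{\mathbf{d}}(\mathbf{1}'\mathbf{d})^2 p_j^{\mathbf{d}}$ is finite, and $t_{ji}(q)\le\frac{1}{\lambda_j^i}\sum_{\mathbf{1}'\mathbf{d}>q}(\mathbf{1}'\mathbf{d})p_j^{\mathbf{d}}$ while $\tau_{ijm}(q)\triangleq\sum_{\mathbf{1}'\mathbf{d}>q}(d_m-\delta_{im})\rho_{ji}^{\mathbf{d}}\le\frac{1}{\lambda_j^i}\sum_{\mathbf{1}'\mathbf{d}>q}(\mathbf{1}'\mathbf{d})^2 p_j^{\mathbf{d}}$, I can pick $q=q(\gamma)$ making $t_{ji}(q)<\gamma$ and $\tau_{ijm}(q)<\gamma$ simultaneously for the finitely many $(i,j)\in S$ and $m\in[p]$. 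Then $\sum_{\mathbf{d}}\pi_{ji}^{\mathbf{d}}=\frac{1-\gamma}{1-t_{ji}(q)}(1-t_{ji}(q))=1-\gamma$, which is (\ref{undersum}); and writing $c_{ji}\triangleq\frac{1-\gamma}{1-t_{ji}(q)}$, the bound $0\le t_{ji}(q)<\gamma$ gives $c_{ji}\in[1-\gamma,1)$, so $\pi_{ji}^{\mathbf{d}}=c_{ji}\rho_{ji}^{\mathbf{d}}<\rho_{ji}^{\mathbf{d}}$ on the support and $0$ elsewhere, which is (\ref{underestimate}). For (\ref{muunderestimate}), the entries of $\hat M$ are $\hat\mu_{ijjm}=c_{ji}(\mu_{ijjm}-\tau_{ijm}(q))$, so
\begin{align*}
|\hat\mu_{ijjm}-\mu_{ijjm}|\ \le\ |c_{ji}-1|\,\mu_{ijjm}+c_{ji}\,\tau_{ijm}(q)\ \le\ \gamma\|M\|+\gamma,
\end{align*}
and $err(\gamma)\triangleq\gamma(\|M\|+1)\to 0$ as $\gamma\to 0$ because $\|M\|$ is finite (again Assumption~\ref{assume}(e)).

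It then remains to check (\ref{coordinateunderestimate}), and here finiteness of the support does the work: it suffices to verify the inequality for the finitely many pairs $((i,j),\mathbf{d})$ with $(i,j)\in S$, $\mathbf{1}'\mathbf{d}\le q$ and $p_j^{\mathbf{d}}>0$, since for all other $\mathbf{d}$ we have $\pi_{ji}^{\mathbf{d}}=0$. For such $\mathbf{d}$, $N_j^{\mathbf{d}}(0)=np_j^{\mathbf{d}}(n)=\Theta(n)$; each step of the exploration activates at most one new sleeping vertex, so $N_j^{\mathbf{d}}(k)\ge N_j^{\mathbf{d}}(0)-k-1$, and living clones are never created, so $L_j^i(k)-\delta_{ij}\le L_j^i(0)=n\lambda_j^i(n)$. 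Hence for $k\le\epsilon n$,
\begin{align*}
\frac{d_iN_j^{\mathbf{d}}(k)}{L_j^i(k)-\delta_{ij}}\ \ge\ \frac{d_i(p_j^{\mathbf{d}}(n)-\epsilon)}{\lambda_j^i(n)}-\frac{d_i}{n\lambda_j^i(n)},
\end{align*}
and the right-hand side converges to $d_ip_j^{\mathbf{d}}/\lambda_j^i=\rho_{ji}^{\mathbf{d}}$ as $n\to\infty$ and then $\epsilon\to 0$, using Assumption~\ref{assume}(b) and (c). Since $\pi_{ji}^{\mathbf{d}}=c_{ji}\rho_{ji}^{\mathbf{d}}<\rho_{ji}^{\mathbf{d}}$ strictly and only finitely many pairs are involved, there is a uniform gap, so taking $\epsilon$ small enough (also small enough that $L_j^i(k)-\delta_{ij}>0$ across the window, which is possible since $\lambda_j^i>0$ on $S$) and $\hat n$ large enough makes $\frac{d_iN_j^{\mathbf{d}}(k)}{L_j^i(k)-\delta_{ij}}>\pi_{ji}^{\mathbf{d}}$ for all $n\ge\hat n$ and all $k\le\epsilon n$.

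I expect the only genuinely delicate part to be the one flagged at the start: that (\ref{coordinateunderestimate}) is pathwise, which is exactly what forces the finite-support truncation and rules out the naive choice $\pi_{ji}^{\mathbf{d}}=(1-\gamma)\rho_{ji}^{\mathbf{d}}$. Everything after that is bookkeeping about the order of the choices ($\gamma$ first, then $q=q(\gamma)$, then $\epsilon$ and $\hat n$), and it rests on just two vanishing-tail estimates — one for the probabilities $\rho_{ji}^{\mathbf{d}}$ and one for the means $\mu_{ijjm}$ — both supplied by the finite second moment in Assumption~\ref{assume}(e).
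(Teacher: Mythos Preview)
Your proof is correct and follows essentially the same route as the paper: truncate the edge-biased distribution to types $\mathbf{d}$ with $\mathbf{1}'\mathbf{d}\le q$, choose $q=q(\gamma)$ so that the tail mass is $<\gamma$, and then verify (\ref{coordinateunderestimate}) on the remaining finitely many types via the convergence $p_j^{\mathbf{d}}(n)\to p_j^{\mathbf{d}}$, $\lambda_j^i(n)\to\lambda_j^i$. The differences are only in presentation --- you give an explicit formula $\pi_{ji}^{\mathbf{d}}=\frac{1-\gamma}{1-t_{ji}(q)}\rho_{ji}^{\mathbf{d}}$ and an explicit $err(\gamma)=\gamma(\|M\|+1)$, whereas the paper leaves $\pi$ implicit and obtains part~(b) via dominated convergence; and for part~(a) the paper cites Lemma~\ref{perturbation} where you argue directly from the monotonicity bounds $N_j^{\mathbf{d}}(k)\ge N_j^{\mathbf{d}}(0)-k-1$ and $L_j^i(k)\le L_j^i(0)$.
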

	\begin{proof}
		Choose $q = q(\gamma) \in \mathbbm{Z}$ such that $\sum_{d} \frac{d_i p_j^{\mathbf{d}}}{\lambda_i^j} \mathbf{1}_{\{\mathbf{1}' \mathbf{d} > q\}} \leq \gamma/2 $. Now choose $\pi_{ji}^{\mathbf{d}}$ satisfying
		(\ref{underestimate}) and (\ref{undersum}) such that $\pi_{ji}^{\mathbf{d}} = 0$ whenever $\mathbf{1}' \mathbf{d} > q$. Using Lemma \ref{perturbation}, we can now choose $\hat{n}$ and $\epsilon$ such that 
		for every $(i,j) \in S$ and $\mathbf{d}$ such that $\mathbf{1}'\mathbf{d} \leq q$, (\ref{coordinateunderestimate}) is satisfied for all $n \geq \hat{n}$ and all $k \leq \epsilon n$.
		 The condition in part (a) is thus satisfied by this choice of $\pi_{ji}^{\mathbf{d}}$.
	
		For any $\gamma$, let us denote the choice of $\pi_{ji}^{\mathbf{d}}$ made above by $\pi_{ji}^{\mathbf{d}} (\gamma)$.  By construction, whenever $M_{ijlm} = 0$, we also have $\hat{M}_{ijlm} = 0$.
		Suppose $M_{ijjm}  = \sum_{\mathbf{d}} (d_m - \delta_{im}) \frac{d_i p_j^{\mathbf{d}}}{\lambda_i^j}  > 0$. Also, by construction we have $0 \leq \pi_{ji}^{\mathbf{d}}(\gamma) < \frac{d_i p_j^{\mathbf{d}}}{\lambda_i^j}$
		and that $\pi_{ji}^{\mathbf{d}}(\gamma) \rightarrow \frac{d_i p_j^{\mathbf{d}}}{\lambda_i^j}$ as $\gamma \rightarrow 0$.
		Let $X_{\gamma}$ be the random variable that takes the value $(d_m - \delta_{im})$ with probability $\pi_{ji}^{\mathbf{d}}(\gamma)$ and $0$ with probability $\gamma$. Similarly,
		 let $X$ be the random variable that takes the value $(d_m - \delta_{im})$
		with probability $\frac{d_i p_j^{\mathbf{d}}}{\lambda_i^j}$. Then, from the above argument have $X_{\gamma} \rightarrow X$ as $\gamma \rightarrow 0$ and that the random variable $X$ dominates the random 
		variable $X_{\gamma}$ for all $\gamma \geq 0$. Note that $X$ is integrable.
		 The proof of part (b) is now complete by using the Dominated Convergence Theorem.
		
	\end{proof}
	
	Assume that the quantities $\epsilon$ and $\pi_{ij}^{\mathbf{d}}$ have been chosen to satisfy the inequalities (\ref{coordinateunderestimate}) and (\ref{muunderestimate}).
	 We now consider each of the events that can occur at each step of the exploration process until time $\epsilon n$ and 
	describe the coupling between $Z_i^j(k+1)$ and $\hat{Z}_i^j(k+1)$ in each case.
	
	 \begin{itemize}
	\item [\textit{Case 1:}]  $A(k) > 0$. \\ 
	Suppose the event $E_i^j$ happens. We describe the coupling in case of each of the following two events.
	\begin{itemize}
		\item[(i).]	$E_a$: the neighbor revealed is an active clone. In this case we simply mimic the evolution of the number of active clones in the original exploration process. 
				Namely, $\hat{Z}_l^m(k+1) = {Z}_l^m(k+1)$ for all $l,m$.
				\item[(ii).]  $E_s^{\mathbf{d}}$: The neighbor revealed is a sleeping clone of type $\mathbf{d}$.
				In this case, we split the event further into two events $E_{s,0}^{\mathbf{d}}$ and $E_{s,1}^{\mathbf{d}}$, that is $E_{s,0}^{\mathbf{d}} \cup E_{s,1}^{\mathbf{d}} = E_s^{\mathbf{d}}$ and
				$E_{s,0}^{\mathbf{d}} \cap E_{s,1}^{\mathbf{d}} = \emptyset$. In particular,
				\begin{align*}
					\mathbf{P}(E_{s,0}^{\mathbf{d}} | E_i^j \cap E_{s}^{\mathbf{d}}) &= \frac{\pi_{ji}^{\mathbf{d}} (L_j^i(k) - \delta_{ij}) } {  d_i N_j^{\mathbf{d}}(k)  } \\
					\mathbf{P}(E_{s,1}^{\mathbf{d}} | E_i^j \cap E_{s}^{\mathbf{d}}) &= 1 -  \mathbf{P}(E_{s,0}^{\mathbf{d}} | E_i^j \cap E_{s}^{\mathbf{d}}) .
				\end{align*}
				For the above to make sense we must have $\pi_{ji} \leq \frac{d_i N_j^{\mathbf{d}}(k)}{L_j^i(k)  - \delta_{ij}} $ which is guaranteed by our choice of $\pi_{ij}^{\mathbf{d}}$.
				We describe the evolution of $B_i^j(k)$ in each of the two cases.
				\begin{itemize}
					\item[(a).] $E_{s,0}^{\mathbf{d}}$: in this case set $\hat{Z}_l^m(k+1) = {Z}_l^m(k+1)$ for all $l,m$.
									
					\item[(b).] $E_{s,1}^{\mathbf{d}}$: In this case, we mimic the evolution of the active clones of event $E_a$ instead of $E_s^{\mathbf{d}}$. More specifically,
				\begin{itemize} 
					\item[-] If $i \neq j$,
					\begin{align*}
						\hat{Z}_i^j(k+1) &=  \hat{Z}_j^i(k+1)  = -1, \\
						\hat{Z}_l^m(k+1) &= 0, \quad \mbox{otherwise }.
					\end{align*}
					\item[-] If $i = j$,
					\begin{align*}
						\hat{Z}_i^i(k+1) &= -2, \\
						\hat{Z}_l^m(k+1) &= 0, \quad \mbox{otherwise }.
					\end{align*}
				\end{itemize}
				\end{itemize}
				
	\end{itemize}
	
	\item [\textit{Case 2:}] $A(k) = 0$. \\
		Suppose that event $E_i^j \cap E^{\mathbf{d}}$ happens. In this case we split $E^{\mathbf{d}}$ into two disjoint events $E_0^{\mathbf{d}}$ and $E_1^{\mathbf{d}}$ such that 
		\begin{align*}
			\mathbf{P}(E_{0}^{\mathbf{d}} | E_i^j \cap E^{\mathbf{d}}) &= \frac{\pi_{ij}^{\mathbf{d}} ( L_j^i(k) - \delta_{ij} )} {  d_j N_i^{\mathbf{d}}(k)  } \\
			\mathbf{P}(E_{1}^{\mathbf{d}} | E_i^j \cap E^{\mathbf{d}}) &= 1 -  \mathbf{P}(E_{0}^{\mathbf{d}} | E_i^j \cap E^{\mathbf{d}}) .
		\end{align*}
		Again, the probabilities above are guaranteed to be less than one for time $k \leq \epsilon n$ because of the choice of $\pi_{ij}^{\mathbf{d}}$.
		The change in $B_i^j(k+1)$ in case of each of the above events is defined as follows.
		\begin{itemize}
			\item[(a)] $E_0^{\mathbf{d}}$.
			\begin{itemize} 
					\item[-] If $i \neq j$,
					\begin{align*}
						\hat{Z}_j^i(k+1) &= -1, \\
						\hat{Z}_i^m(k+1) &= d_m - \delta_{im}, \\
						\hat{Z}_l^m(k+1) &= 0, \quad \mbox{for } l \neq j.
					\end{align*}
					\item[-] If $i = j$,
					\begin{align*}
						\hat{Z}_i^i(k+1) &=  -2 + d_i,  \\
						\hat{Z}_i^m(k+1) &= d_m,  \ \mbox{for } m \neq i,\\
						\hat{Z}_l^m(k+1) &= 0,   \ \mbox{for } l \neq i.
					\end{align*}
				\end{itemize}
%
			\item[(b)] $E_1^{\mathbf{d}}$. 
				\begin{itemize} 
					\item[-] If $i \neq j$,
					\begin{align*}
						\hat{Z}_i^j(k+1) &=  \hat{Z}_j^i(k+1)  = -1, \\
						\hat{Z}_l^m(k+1) &= 0, \quad \mbox{otherwise }.
					\end{align*}
					\item[-] If $i = j$,
					\begin{align*}
						\hat{Z}_i^i(k+1) &= -2, \\
						\hat{Z}_l^m(k+1) &= 0, \quad \mbox{otherwise }.
					\end{align*}
				\end{itemize}
		\end{itemize}
	\end{itemize}
	This completes the description of the probability distribution of the joint evolution of the processes $A_i^j(k)$ and $B_i^j(k)$.
	
	Intuitively, we are trying to decrease the probability of the cases that actually help in the growth of the component and compensate by increasing the probability of the event which hampers the growth of the component
	(back-edges).
	From the description of the the coupling between $Z_i^j(k+1)$ and $\hat{Z}_i^j(k+1)$ it can be seen that for time $k < \epsilon n$, with 
	probability one we have  $B_i^j(k) \leq A_i^j(k)$. 
		 
		 
	Our next goal is to show
	 that for some $(i,j) \in S$ the quantity $B_i^j(k)$ grows to a linear size by time $\epsilon n$.
	   Let $H(k) = \sigma(  \{A_i^j(r), B_i^j(r), \quad (i,j) \in S, \ 1\leq r \leq k\} )$ denote the filtration of the joint exploration process till time $k$. 
	 Then the expected conditional change in $B_i^j(k)$ can be computed by considering the two cases above. First suppose that at time step $k$ we have $A(k) > 0$, i.e., we are in Case 1. We first assume that $i \neq j$.
	 Note that the only events that affect $\hat{Z}_i^j(k+1)$ are $E_i^j$ and $E_m^i$ for $m \in [p]$. Then,
	 \begin{align}
	 	\mathbf{E} [ \hat{Z}_i^j(k+1) | H(k)] &= \mathbf{P}(E_i^j | H(k)) \ \mathbf{E}[\hat{Z}_i^j(k+1) | H(k), E_i^j] \label{expectedchange} \\
			&+ \sum_{m} \mathbf{P}(E_m^i \cap E_a | H(k) )     	\ \mathbf{E}[\hat{Z}_i^j(k+1) | H(k), E_m^i \cap E_a ]  \notag \\
			&+  \sum_{m,\mathbf{d}} \mathbf{P}(E_m^i \cap E_{s0}^{\mathbf{d}} | H(k)) 	\ \mathbf{E}[\hat{Z}_i^j(k+1) | H(k), E_m^i \cap E_{s0}^{\mathbf{d}}]	 \notag \\
			&+ \sum_{m,\mathbf{d}} \mathbf{P}(E_m^i  \cap E_{s1}^{\mathbf{d}}  | H(k))\ \mathbf{E}[\hat{Z}_i^j(k+1) | H(k), E_m^i  \cap E_{s1}^{\mathbf{d}} ]. \notag
	\end{align}		
	The event $E_m^i \cap E_a$ affects $\hat{Z}_i^j(k+1)$ only when $m = j$, and in this case, $\hat{Z}_i^j(k+1)  = -1$. The same is true for the event $E_m^i   \cap E_{s1}^{\mathbf{d}}$.
	In the event $E_m^i   \cap E_{s0}^{\mathbf{d}}$, we have $ \hat{Z}_i^j(k+1) = d_j - \delta_{jm}$. Using this, the above expression is
	\begin{align*}		
			&= \frac{A_i^j(k)}{A(k)} (-1) \ + \ \frac{A_j^i(k)}{A(k)} \frac{A_i^j(k)}{L_i^j(k) } (-1) + \sum_{m, \mathbf{d}} \frac{A_m^i(k)}{A(k)} \pi_{im}^{\mathbf{d}}  (d_j - \delta_{jm})  \\
			&+  \sum_{\mathbf{d}} \frac{A_j^i(k)}{A(k)}   \left( \frac{d_j N_i^{\mathbf{d}}(k)}{L_i^j(k)}  - \pi_{ij}^{\mathbf{d}}  \right) (-1) \\
			&= \frac{A_i^j(k)}{A(k)} (-1) + \frac{A_j^i(k)}{A(k)} \frac{A_i^j(k)}{L_i^j(k)} (-1) + \sum_{m} \frac{A_m^i(k)}{A(k)} \left( \sum_{\mathbf{d}} \pi_{im}^{\mathbf{d}} (d_j - \delta_{jm}) \right) \\
			&+ \sum_{\mathbf{d}} \frac{A_j^i(k)}{A(k)}   \left( \frac{d_j N_i^{\mathbf{d}}(k)}{L_i^j(k)}  - \pi_{ij}^{\mathbf{d}}  \right) (-1). \\
	\end{align*}
	\begin{align*}
			&= \frac{A_i^j(k)}{A(k)} (-1) + \frac{A_j^i(k)}{A(k)} \left( \frac{A_i^j(k)}{L_i^j(k)} 
			+  \sum_{\mathbf{d}}   \left( \frac{d_j N_i^{\mathbf{d}}(k)}{L_i^j(k)}\right)  -  \sum_{\mathbf{d}} \pi_{ij}^{\mathbf{d}}  \right) (-1) \\
			&+  \sum_{m} \frac{A_m^i(k)}{A(k)} \left( \sum_{\mathbf{d}} \pi_{im}^{\mathbf{d}} (d_j - \delta_{jm}) \right) \\
			&\\
			&= \frac{A_i^j(k)}{A(k)} (-1) + \frac{A_j^i(k)}{A(k)} (-\gamma) +  \sum_{m} \frac{A_m^i(k)}{A(k)} \left( \sum_{\mathbf{d}} \pi_{im}^{\mathbf{d}} (d_j - \delta_{jm}) \right),
	 \end{align*}
	 where the last equality follows from (\ref{undersum}).
	 Now suppose that at time $k$ we have $A(k) = 0$, i.e., we are in Case 2. In this case, we can similarly compute
	 \begin{align*}
	 	\mathbf{E} [ \hat{Z}_i^j(k+1) | H(k)] &= \mathbf{P}(E_i^j | H(k)) \ \mathbf{E}[\hat{Z}_i^j(k+1) | H(k), E_i^j] \\
		&+  \sum_{m,\mathbf{d}} \mathbf{P}(E_i^m \cap E^{\mathbf{d}}  \cap E_{0}^{\mathbf{d}} | H(k)) 	\ \mathbf{E}[\hat{Z}_i^j(k+1) | H(k), E_i^m \cap E^{\mathbf{d}}  \cap E_{0}^{\mathbf{d}}]	\\
			&+ \sum_{m,\mathbf{d}} \mathbf{P}(E_i^m \cap  E^{\mathbf{d}}  \cap E_{1}^{\mathbf{d}}  | H(k))\ \mathbf{E}[\hat{Z}_i^j(k+1) | H(k), E_i^m \cap  E^{\mathbf{d}}  \cap E_{1}^{\mathbf{d}} ].
	 \end{align*}
	 Using the description of the coupling in Case 2, the above expression is
	 \begin{align*}
	 	&=\frac{L_j^i(k)}{L(k)} (-1) + \sum_m  \frac{L_i^m(k)}{L(k)} \sum_d \pi_{mi}^{\mathbf{d}} (d_j - \delta_{jm}) +  \sum_{\mathbf{d}} \frac{L_i^j(k)}{L(k)} \frac{d_j N_i^{\mathbf{d}}(k)}{L_i^j(k)} 
		\left(1 -  \frac{\pi_{ji}^{\mathbf{d}} L_i^j(k)}{d_j N_i^{\mathbf{d}}(k)} \right) \\
		&=\frac{L_j^i(k)}{L(k)} (-1) + \frac{L_j^i(k)}{L(k)} (-\gamma) + \sum_m  \frac{L_i^m(k)}{L(k)} \sum_d \pi_{mi}^{\mathbf{d}} (d_j - \delta_{jm}) .
	 \end{align*}
	 For the case $i = j$, a similar computation will reveal that we obtain very similar expressions to the case $i \neq j$. We give the expressions below and omit the computation.
	 For Case 1, $A(k) > 0$,
	 \begin{align*}
	 	\mathbf{E} [ \hat{Z}_i^i(k+1) | H(k)] = \frac{A_i^i(k)}{A(k)} (-1) + \frac{A_i^i(k)}{A(k)} (-\gamma) +  \sum_{m} \frac{A_m^i(k)}{A(k)} \left( \sum_{\mathbf{d}} \pi_{im}^{\mathbf{d}} (d_i - \delta_{im}) \right).
	 \end{align*}
	 and for Case 2, $A(k) = 0$,
	 \begin{align*}
	 	\mathbf{E} [ \hat{Z}_i^i(k+1) | H(k)] = \frac{L_i^i(k)}{L(k)} (-1) + \frac{L_i^i(k)}{L(k)} (-\gamma) + \sum_m  \frac{L_i^m(k)}{L(k)} \sum_d \pi_{mi}^{\mathbf{d}} (d_i - \delta_{im}) .
	 \end{align*}
	Define the vector of expected change  $\mathbf{E}[\mathbf{\hat{Z}}(k+1) | H(k)] \triangleq \left(\mathbf{E}[ Z_i^j(k+1) | H(k)] , \ (i,j) \in S \right)$. Also define $\mathbf{A}(k) = \left( \frac{A_i^j(k)}{A(k)} , \ (i,j) \in S \right)$ if $A(k) > 0$ and 
	$\mathbf{A}(k) = \left( \frac{L_i^j(k)}{L(k)} , \ (i,j) \in S \right)$ if $A(k) = 0$.
	Let $Q \in \mathbbm{R}^{N\times N}$ be given by
	\begin{align*}
		Q_{ijji} &= 1, \ \mbox{for } (i,j) \in S ,   \\
		Q_{ijlm} &= 0, \ \mbox{otherwise }.
	\end{align*}
	Then we can write the expected  change of $B_i^j(k)$ compactly as
	\begin{align} 
		\mathbf{E}[\mathbf{\hat{Z}}(k+1) | H(k)]  = \left( \hat{M}   - \gamma Q  - I  \right ) \mathbf{A}(k) . \label{eq:linearsystem}
	\end{align}
	Fix $\delta > 0$. Let $\gamma$ be small enough such that the function $err(\gamma)$ in (\ref{muunderestimate}) satisfies
	 $err(\gamma) \leq \delta$. Using Lemma $\ref{valid}$ we can choose $\epsilon$ and $\pi_{ij}^{\mathbf{d}}$ satisfying
	 (\ref{coordinateunderestimate}) and (\ref{muunderestimate}). In particular, we have $||\hat{M} - M|| \leq \delta$. For small enough $\delta$, both $M$ and $\hat{M}$ have strictly positive entries
	 in the exact same locations. Since $M$ is irreducible, it follows that $\hat{M}$ is irreducible.
	The Perron-Frobenius eigenvalue of a matrix which is the spectral norm of the matrix is a continuous function of its entries.  For small enough $\delta$, the Perron-Frobenius eigenvalue of $\hat{M}$
	 is bigger than $1$, say $1 + 2 \zeta$ for some $\zeta>0$.
	Let $\mathbf{z}$ be the corresponding left eigenvector with all positive entries and let $z_m \triangleq \min_{(i,j) \in S} z_i^j$ and $z_M \triangleq \max_{(i,j) \in S} z_i^j$.
	Define the random process $W(k) \triangleq \sum_{(i,j) \in S} z_i^j B_i^j (k)$. Then setting $\Delta W(k+1) = W(k+1) - W(k)$, from (\ref{eq:linearsystem}) we have 
	\begin{align*}
		\mathbf{E} [\Delta W(k+1) | H(k)] &= \mathbf{z}' \mathbf{E\hat{Z}}(k+1) \\
		&= \mathbf{z}' \left( \hat{M}   -   I \gamma Q  \right ) \mathbf{A}(k) \\
		&= 2 \zeta \mathbf{z}' \mathbf{A}(k)  - \gamma \mathbf{z}' Q \mathbf{A}(k).
	\end{align*}
	The first term satisfies $ 2 \zeta z_m \leq 2 \zeta \mathbf{z}' \mathbf{A}(k) \leq 2 \zeta z_M$.
	  This is because $\mathbf{1}' \mathbf{A}(k) = 1$ and hence $\mathbf{z}' \mathbf{A}(k)$ is a convex combination of the
	entries of $\mathbf{z}$. By choosing $\gamma$ small enough, we can ensure $\gamma \mathbf{z}' Q \mathbf{A}(k) \leq \zeta z_m$. Let $\kappa = \zeta z_m > 0$. Then, we have
	\begin{align} \label{positiveexpectation}
		\mathbf{E} [\Delta W(k+1) | H(k)] \geq \kappa.
	\end{align}
	
	We now use a one-sided Hoeffding bound argument to show that with high probability the quantity $W(k)$ grows to a linear size by time $\epsilon n$. Let $X(k+1) = \kappa - \Delta W(k+1)$. Then 
	\begin{align} \label{lessthan0}
		\mathbf{E}[X(k+1) | H(k)] \leq 0.
	\end{align}
	Also note that $|X(k+1)| \leq c \omega(n)$ almost surely, for some constant $c>0$.
	
	For any $B > 0$ and for  any $-B \leq x \leq B $, it can be verified that
	\begin{align*}
		e^{x} &\leq \frac{1}{2} \frac{e^B + e^{-B}}{2} + \frac{1}{2} \frac{e^B - e^{-B}}{2} x \leq e^{\frac{B^2}{2}} + \frac{1}{2} \frac{e^B - e^{-B}}{2} x.
	\end{align*}
	Using the above, we get for any $t>0$,
	\begin{align*}
		\mathbf{E}[e^{t X(k+1)} | H(k)] \leq e^{\frac{t^2 c^2 \omega^2(n)}{2}} + \frac{1}{2} \frac{e^{t c \omega(n)} - e^{-t c \omega(n)}}{2} \mathbf{E}[X(k+1)|H(k)] \leq e^{\frac{t^2 c^2 \omega^2(n)}{2}},
	\end{align*}
	where the last statement follows from (\ref{lessthan0}). We can now compute
	\begin{align*}
		\mathbf{E}[e^{t \sum_{k = 0}^{\epsilon n - 1} X(k+1)} ] = \prod_{k = 0}^{\epsilon n -1} \mathbf{E}[e^{t X(k+1)} | H(k)]  \leq e^{\frac{t^2 c^2 \omega^2(n) \epsilon n}{2}}.
	\end{align*}
	So,
	\begin{align*}
		\mathbf{P}\left( \sum_{k = 0}^{\epsilon n - 1} X(k+1) > \epsilon \kappa n /2 \right) = \mathbf{P}( e^{t \sum_{k = 0}^{\epsilon n - 1} X(k+1)  -  t  \epsilon \kappa n /2} > 1 ) \leq
		 e^{-    \frac{t \epsilon \kappa n}{2} + \frac{t^2 c^2 \omega^2(n ) \epsilon n}{2}}.
	\end{align*}
	Optimizing over $t$, we get 
	\begin{align*}	
		\mathbf{P} \left( \sum_{k = 0}^{\epsilon n - 1} X(k+1) > \epsilon \kappa n /2 \right) \leq e^{- \frac{\kappa^2  \epsilon n}{8 c^2 \omega^2(n)}  } = o(1),
	\end{align*}
	which follows by using Lemma \ref{maxdegree}.
	Substituting the definition of $X(k+1)$, 
	\begin{align}
		\mathbf{P} \left( W(\epsilon n) < \frac{\kappa \epsilon n}{2} \right) = o(1).   \label{linearw}
	\end{align}
	Recall that $W(k) = \sum_{(i,j) \in S} z_i^j B_i^j (k) \leq N z_M \max_{(i,j) \in S} B_i^j(k)  \leq N z_M \max_{(i,j) \in S} A_i^j(k)$.
	 Define $\mu \triangleq \frac{\kappa \epsilon}{2 N z_M }$. Then it follows from (\ref{linearw}) that there exists a pair $(i',j')$ such that
	 \begin{align*}
		A_{i'}^{j'}(\epsilon n) > \mu n, \quad \mbox{w.p } \quad 1 - o(1). 
	\end{align*}
		Using the fact that the number of active clones grows to a linear size we now show that the corresponding component is of linear size. To do this, we continue the exploration process in a modified fashion
	from time $\epsilon n$ onwards. By this we mean, instead of choosing active clones uniformly at random in step $2(a)$ of the exploration process, we now follow a more specific order in which we choose 
	the active clones and then reveal their neighbors. This is still a valid way of continuing the exploration process. The main technical result required for this purpose is Lemma \ref{coverall} below. 
	
	\begin{lem} \label{coverall}
		Suppose that after $\epsilon n$ steps of the exploration process, we have $A_{i'}^{j'}(\epsilon n)> \mu n$ for some pair $(i',j')$.
		 Then, there exists $\epsilon_1 > \epsilon$ and $\delta_1 > 0$ for which we can continue the exploration process in a modified way by altering the order in which active clones are chosen in step 2(a)
		  of the exploration proces such
		that at time $\epsilon_1 n$, w.h.p. for all $(i,j) \in S$, we have  $A_i^j(\epsilon_1 n) > \delta_1 n$.
	\end{lem}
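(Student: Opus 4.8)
The plan is to exploit the irreducibility of $M$ to propagate the linearly many active clones of type $(i',j')$ to every other type in $S$, by continuing the exploration while choosing the order in which active clones are processed. First I would note that irreducibility of $M$ means the digraph $\mathcal{H}$ on the node set $S$ — with an arc from $(i,j)$ to $(l,m)$ iff $\mu_{ijlm}>0$, which forces $l=j$ — is strongly connected, so there is a spanning out-arborescence of $\mathcal{H}$ rooted at $(i',j')$. Let $v_1=(i',j'),v_2,\dots,v_N$ be a depth-first preorder of its nodes, so that for each $r\ge 2$ the tree-parent $v_{\rho(r)}$ of $v_r$ satisfies $\rho(r)<r$ and $(v_{\rho(r)},v_r)$ is an arc of $\mathcal{H}$. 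I would then continue the exploration from time $\epsilon n$ in $N$ consecutive phases: in phase $r$ I always select in step 2(a) an active clone of type $v_r$, and phase $r$ runs for $T_r=c_r n$ steps, where $c_r=c_1\theta^{\,r-1}$ for constants $c_1,\theta\in(0,1)$ fixed at the end. This reordering is legitimate because, by deferred decisions, the law of the configuration matching does not depend on the order in which active clones are processed; and since $\mu$ and $c_1$ will be chosen so that $A(k)>0$ and each $L_j^i(k)>0$ throughout, the continuation always stays in Case 1 and never needs a restart.

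The heart of the argument is a per-phase growth estimate. Fix a phase $r$ with $v_r=(i,j)$ and a child $v_{r'}=(j,m)$ of $v_r$. When a clone of type $(i,j)$ is processed during phase $r$, revealing a sleeping vertex of type $\mathbf{d}$ in $G_j$ creates $d_m-\delta_{im}$ new active clones of type $(j,m)$, whereas a back-edge creates none if $m\ne i$ and removes one if $m=i$; hence the conditional expected change of $A_j^m$ per step of phase $r$ is $(M_k(n))_{v_r,(j,m)}-\delta_{im}A_j^i(k)/L_j^i(k)$, which by Lemmas \ref{perturbation} and \ref{perturbation1} stays within an arbitrarily small prescribed error of $\mu_{ijjm}>0$ once $\epsilon_1$ is small. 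So for $m\ne i$ the count $A_j^m$ is non-decreasing during phase $r$ with drift at least $\tfrac12\mu_{ijjm}$, while for $m=i$ the drift is at least $\tfrac12\mu_{ijji}$ so long as $A_j^i(k)$ stays below a fixed linear threshold — and once it reaches that threshold it is already of linear size, after which it can drop by at most one per step. In each case I would write $A_j^m(k)$ minus its accumulated conditional drift as a martingale with increments $O(\omega(n))$, apply the Azuma--Hoeffding inequality, and use $\omega(n)=o(\sqrt n)$ (Lemma \ref{maxdegree}) to bound the fluctuation over the $T_r=c_r n$ steps by $o(n)$ w.h.p.; the upshot is a fixed $\mu_\star>0$ such that, w.h.p., at the end of phase $r$ every child type of $v_r$ has at least $\mu_\star c_r n$ active clones. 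The self-loop case $v_r=(i,i)$ is handled identically using the $i=j$ transition rules of Section \ref{sec:exploration}.

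Next I would establish persistence and fix the constants. Tracking $A_{v_r}$ from the end of phase $\rho(r)$, where it is, w.h.p., at least $\mu_\star c_{\rho(r)}n$ by the growth estimate, the only later decreases of $A_{v_r}=A_i^j$ come from phase $r$ itself (processing $(i,j)$) and, if it falls after phase $\rho(r)$, from the unique phase that processes the reversed type $(j,i)$; both of those phases have index exceeding $\rho(r)$, hence length at most $\theta c_{\rho(r)}n$, so the total decrease after phase $\rho(r)$ is a bounded multiple of $\theta c_{\rho(r)}n$. Choosing $\theta$ small in terms of $\mu_\star$ yields $A_{v_r}(\epsilon_1 n)\ge\tfrac12\mu_\star c_{\rho(r)}n$ for every $r$ (and $A_{i'}^{j'}(\epsilon_1 n)\ge\mu n-O(c_1 n)$ for the root by the same reasoning), so one can take $\delta_1=\tfrac12\mu_\star\min_r c_r>0$. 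Finally $c_1$ is taken small enough that $c_1<\mu/2$ (so phase $1$ never exhausts the type-$(i',j')$ clones), $c_1$ lies below all the fixed thresholds used above, and $\epsilon_1:=\epsilon+\sum_r c_r\le\epsilon+2c_1$ is inside the window in which Lemmas \ref{perturbation}, \ref{perturbation1} and continuity of the Perron--Frobenius eigenvalue keep the process in the good regime up to time $\epsilon_1 n$; a union bound over the $N$ phases then finishes the argument.

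I expect the main obstacle to be precisely this bookkeeping: each phase must be long enough for the relevant child counts to cross a linear threshold, short enough that no already-grown type is depleted by the later back-edges that feed on it, and short enough in aggregate that the whole continuation stays inside the perturbation window of Lemmas \ref{perturbation}--\ref{perturbation1}. The reverse-child case, where the naive drift of $A_j^i$ can be nonpositive, is the single place where a different argument is needed, namely the observation that the back-edge loss there is self-limiting below a linear level.
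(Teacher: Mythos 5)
Your proposal is correct and follows essentially the same blueprint as the paper: exploit irreducibility of $M$ to get a spanning directed tree of $\mathcal{H}$ rooted at $(i',j')$, propagate linear activity from parent type to child type phase by phase by controlling the order of clone selection, and use a one-sided Hoeffding/Azuma bound with $\omega(n)=o(\sqrt n)$ to concentrate the drift. The differences are in bookkeeping rather than substance: the paper takes a BFS shortest-path tree, proceeds stage-by-depth with substages and an inductively updated threshold $\delta^{(l+1)}=\min\{(1-\alpha)\delta^{(l)},\,\delta_{j}^m\}$, while you take a DFS preorder of an out-arborescence with geometrically decaying phase lengths $c_r=c_1\theta^{r-1}$ and settle persistence by summing the tail of the geometric series — both schemes close the induction. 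The one place you take an unnecessary detour is the reverse-child case $m=i$, which you flag as needing a separate self-limiting threshold argument. In fact no case split is needed: the paper's inequality (\ref{boundonactive}), which is a direct consequence of Lemma \ref{perturbation}, shows that $\frac{A_j^i(k)}{L_j^i(k)-\delta_{ij}}<\delta$ holds deterministically and uniformly for every $(i,j)\in S$ and every $k\le\epsilon n$. The reason is structural: $L_j^i(k)$ stays within $O(\epsilon n)$ of its initial $\Theta(n)$ value, and the number of sleeping clones of type $(j,i)$ can drop by at most $O(\epsilon n)$ over $\epsilon n$ steps (by the uniform integrability from Lemma \ref{ui}), so $A_j^i(k)=L_j^i(k)-\#\{\text{sleeping }(j,i)\}$ is itself $O(\epsilon n)$ and the ratio is uniformly small. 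Thus the per-step drift bound $(M_k(n))_{ijjm}-\delta_{im}\frac{A_j^i(k)}{L_j^i(k)-\delta_{ij}}\ge M_{ijjm}-2\delta$ holds for \emph{all} child types, including $(j,i)$, with no extra threshold bookkeeping. Aside from that superfluous complication, your argument is sound.
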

	
	The above lemma says that we can get to a point in the exploration process where there are linearly many active clones of \emph{every} type. An immediate consequence of this is the Corollary \ref{coverlinear} below.
	We remark here that Corollary \ref{coverlinear} is merely one of the consequences of Lemma \ref{coverall} an can be proved in a much simpler way.
	But as we will see later, we need the full power of Lemma \ref{coverall} to prove 
	Theorem \ref{theorem1}-(b).
	
	\begin{cor} \label{coverlinear}
		Suppose that after $\epsilon n$ steps of the exploration process, we have $A_{i'}^{j'}(\epsilon n)> \mu n$ for some pair $(i',j')$. Then there exists $\delta_2 > 0$ such that w.h.p.,  
		the neighbors of the $A_{i'}^{j'}$ clones include at least $\delta_2 n $ 
		vertices in $G_j'$.
	\end{cor}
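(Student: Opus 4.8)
The plan is to argue \emph{deterministically} on the event $\{A_{i'}^{j'}(\epsilon n)>\mu n\}$ (which by the preceding results holds w.h.p.), exploiting only the structure of the configuration model and the uniform integrability provided by Lemma \ref{ui}; no martingale or concentration argument is needed here, in contrast to Lemma \ref{coverall}.

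First I would observe that, in the configuration model, the matching restricted to the type pair $(i',j'),(j',i')$ is a bijection between the $n\lambda_{i'}^{j'}(n)$ clones of type $(i',j')$ and the equally many clones of type $(j',i')$ (and, when $i'=j'$, a fixed-point-free matching of the type-$(i',i')$ clones among themselves). Hence the $A_{i'}^{j'}(\epsilon n)>\mu n$ active clones of type $(i',j')$ are matched to more than $\mu n$ \emph{distinct} clones of type $(j',i')$, each belonging to a vertex of $G_{j'}$; these neighbor clones are moreover still living at time $\epsilon n$, since an active clone has not yet had its neighbor revealed. It therefore suffices to show that any collection of more than $\mu n$ distinct clones of type $(j',i')$ is spread over at least $\delta_2 n$ distinct vertices of $G_{j'}$ for a suitable constant $\delta_2>0$.

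The key estimate controls the clone mass on high-degree vertices. Recall that a vertex $v\in G_{j'}$ carries exactly $d_{i'}(v)$ clones of type $(j',i')$. Since $\{\mathbf{1}'\mathbf{D}_{\mathbf{p}(n)}\}_{n}$ is uniformly integrable by Lemma \ref{ui}, there is a constant $q$ (independent of $n$) such that $\mathbf{E}[\mathbf{1}'\mathbf{D}_{\mathbf{p}(n)}\mathbf{1}_{\{\mathbf{1}'\mathbf{D}_{\mathbf{p}(n)}>q\}}]\le \mu/2$ for all large $n$. Because $d_{i'}(v)\le \mathbf{1}'\mathbf{d}(v)$ and $\{d_{i'}(v)>q\}\subseteq\{\mathbf{1}'\mathbf{d}(v)>q\}$, the total number of type-$(j',i')$ clones lying on vertices $v$ with $d_{i'}(v)>q$ is at most $\sum_{v:\,d_{i'}(v)>q} d_{i'}(v)\le n\,\mathbf{E}[\mathbf{1}'\mathbf{D}_{\mathbf{p}(n)}\mathbf{1}_{\{\mathbf{1}'\mathbf{D}_{\mathbf{p}(n)}>q\}}]\le (\mu/2)n$. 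Consequently, out of our $>\mu n$ distinct neighbor clones at least $(\mu/2)n$ belong to "low-degree" vertices $v$ with $d_{i'}(v)\le q$, and each such vertex holds at most $q$ clones of type $(j',i')$; hence these clones occupy at least $\mu n/(2q)$ distinct vertices of $G_{j'}$. Taking $\delta_2=\mu/(2q)$ finishes the proof.

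The one place where care is required — and the only potential obstacle if one tried a cruder count — is that a priori the $\mu n$ neighbor clones could concentrate on a handful of very-high-degree vertices: since a single vertex of $G_{j'}$ may carry up to $\omega(n)=o(\sqrt n)$ clones, the trivial bound gives only $\mu n/\omega(n)=o(n)$ distinct vertices. The finite-first-moment assumption, in the quantitative guise of uniform integrability, is exactly what rules this out, by capping the clone mass on high-degree vertices at a small linear amount and thereby forcing a linear fraction of the neighbor clones onto bounded-degree vertices. (The full force of Lemma \ref{coverall}, with its delicate re-ordering of the exploration, is reserved for the proof of Theorem \ref{theorem1}(b).)
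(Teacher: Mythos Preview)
Your proof is correct and, as the paper itself remarks just before the corollary, considerably simpler than the route the paper actually takes. The paper deduces the corollary from Lemma \ref{coverall}: it first upgrades the single linear pool $A_{i'}^{j'}(\epsilon n)>\mu n$ to linearly many active clones of \emph{every} type via the irreducibility-driven propagation argument, and then, for a fixed degree type $\mathbf{d}$ with $p_{j}^{\mathbf{d}}>0$, invokes the sampling-without-replacement concentration bound (Lemma \ref{revelationBound}) to show that revealing the neighbors of a linear batch of type-$(i,j)$ clones hits a linear number of the still-sleeping type-$\mathbf{d}$ vertices in $G_j$. This in fact yields linearly many vertices in \emph{each} part $G_j$, which is more than the corollary asserts.

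Your approach bypasses both of these ingredients. You observe that the matching already supplies $\mu n$ distinct neighbor clones of type $(j',i')$, and then use only uniform integrability of $\mathbf{1}'\mathbf{D}_{\mathbf{p}(n)}$ to cap the clone mass on high-degree vertices by $(\mu/2)n$, forcing at least $(\mu/2)n$ of the neighbor clones onto vertices with $d_{i'}\le q$ and hence onto at least $\mu n/(2q)$ distinct vertices. The argument is deterministic on the degree sequence once one is on the event $\{A_{i'}^{j'}(\epsilon n)>\mu n\}$, and needs neither the re-ordered exploration of Lemma \ref{coverall} nor any probabilistic estimate like Lemma \ref{revelationBound}. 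The trade-off is that your argument establishes only the stated claim about $G_{j'}$, whereas the paper's detour through Lemma \ref{coverall} gives the stronger conclusion of linearly many vertices in every part; but since the full strength of Lemma \ref{coverall} is anyway needed later for Theorem \ref{theorem1}(b), the paper loses nothing by taking the longer path here.
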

	
	Before proving Lemma \ref{coverall}, we state a well known result. The proof can be obtained by standard large deviation techniques. We omit the proof.
	
	\begin{lem} \label{revelationBound}
		Fix $m$. Suppose there are there are $n$ objects consisting of $\alpha_i n$ objects of type $i$ for $1 \leq i \leq m$.
		Let $\beta > 0$ be a constant that satisfies $\beta < \max_i \alpha_i$.
		Suppose we pick $\beta n$ objects at random from these $n$ objects without replacement. Then for given $\epsilon' > 0$ there exists $z = z(\epsilon', m)$ such that, 
		\begin{align*}
			\mathbf{P}\left( \left| \frac{\# \mbox{objects chosen of type } i}{n} - \alpha_i \beta  \right|  > \epsilon' \right) < z^n.
		\end{align*}
		
	\end{lem}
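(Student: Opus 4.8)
The plan is to identify the count of sampled objects of a given type as a hypergeometric random variable and apply a sub‑Gaussian tail bound for sampling without replacement. Fix a type $i \in [m]$ and let $X_i$ denote the number of the $\beta n$ sampled objects that are of type $i$. Since the sample is drawn uniformly at random without replacement from a population of $n$ objects containing exactly $\alpha_i n$ objects of type $i$, the variable $X_i$ has the hypergeometric distribution with these parameters, and in particular $\mathbf{E}[X_i] = \alpha_i \beta n$. The task is thus to show that $X_i$ concentrates around its mean on the scale $\epsilon' n$ with failure probability decaying geometrically in $n$.

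I would obtain this from the classical comparison theorem of Hoeffding, which asserts that for a convex function the expectation over a sample drawn without replacement is dominated by the expectation over the corresponding i.i.d.\ sample; consequently the usual Hoeffding bound for sums of $[0,1]$‑valued independent variables applies to $X_i$ verbatim: for every $t>0$,
\[
	\mathbf{P}\big(|X_i - \alpha_i \beta n| \ge t\big) \;\le\; 2\exp\!\Big(-\tfrac{2t^2}{\beta n}\Big).
\]
Taking $t = \epsilon' n$ and using $\beta \le 1$ (which holds because $\beta n \le n$) gives
$\mathbf{P}\big(|X_i/n - \alpha_i\beta| \ge \epsilon'\big) \le 2\exp(-2(\epsilon')^2 n)$. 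A union bound over the $m$ types then yields $\mathbf{P}\big(\exists\, i:\ |X_i/n - \alpha_i\beta| \ge \epsilon'\big) \le 2m\exp(-2(\epsilon')^2 n)$, which accounts for the dependence of $z$ on $m$; choosing any $z = z(\epsilon',m)$ with $e^{-2(\epsilon')^2} < z < 1$ makes the right‑hand side smaller than $z^n$ for all $n$ large enough, as claimed.

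For self‑containedness I would also indicate the elementary alternative that avoids the comparison theorem: write $\mathbf{P}(X_i = k) = \binom{\alpha_i n}{k}\binom{(1-\alpha_i)n}{\beta n - k}\big/\binom{n}{\beta n}$, estimate each binomial coefficient by Stirling's formula to get $\mathbf{P}(X_i = k) \le \exp\!\big(n\,g(k/n) + O(\log n)\big)$ where $g$ is the associated relative‑entropy rate function, which is strictly concave and uniquely maximized at $x=\alpha_i\beta$ with value $0$; summing over the at most $n+1$ ``bad'' values of $k$ (those with $|k/n - \alpha_i\beta| > \epsilon'$) produces a bound $(n+1)\exp(-c(\epsilon')n + O(\log n))$ with $c(\epsilon')>0$, again absorbed into $z^n$. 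There is no genuine obstacle here; the only point that needs care is that the sample is drawn \emph{without} replacement, so a Chernoff argument assuming independence is not licensed outright — this is exactly what Hoeffding's comparison theorem (or the direct binomial‑coefficient estimate above) repairs.
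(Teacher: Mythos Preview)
Your proof is correct; the paper itself does not prove this lemma at all but simply states that it ``can be obtained by standard large deviation techniques'' and omits the proof. Your Hoeffding comparison argument for sampling without replacement (and the Stirling/rate-function alternative) is precisely such a standard technique, so there is nothing to compare --- you have supplied exactly what the paper left out.
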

	
	\begin{proof}[Proof of Lemma \ref{coverall}]
		The proof relies on the fact that the matrix $M$ is irreducible. If we denote the underlying graph associated with $M$ by $\mathcal{H}$, then $\mathcal{H}$ is strongly connected. 
		We consider the subgraph $\mathcal{T}_{i'}^{j'}$ of 
		$\mathcal{H}$ which  is the shortest path tree in $\mathcal{H}$ rooted at the node $(i',j')$. We traverse $\mathcal{T}_{i'}^{j'}$ breadth first.  Let $d$ be the depth of $\mathcal{T}_{i'}^{j'}$.
		We continue the exploration process from this point in $d$ stages $1,2, \ldots, d$. Stage $1$ begins right after time $\epsilon n$. Denote the time at which stage $l$ ends by $\epsilon_l n$.
		 For convenience, we will assume a base stage $0$, which includes
		all events until time $\epsilon n$.
		For $1 \leq l \leq d$, let $\mathcal{I}_l$ be the set of nodes $(i,j)$ at depth $l$ in $\mathcal{T}_{i'}^{j'}$. We let $\mathcal{I}_0 = \{ (i',j')\}$.
		
		We will prove by induction that for $l = {0,1, \ldots, d}$, there exists $\delta^{(l)} > 0$ such that at the end of stage $l$, we have w.h.p.,
		$A_i^j > \delta^{(l)} n$ for each $(i,j) \in \bigcup_{x = 0}^{l} \mathcal{I}_x$.
		Note that at the end of stage $0$ we have w.h.p. $A_{i'}^{j'} > \mu n$. So we can choose $\delta^{(0)} = \mu$ to satisfy the base 
		case of the induction.
		Suppose $| \mathcal{I}_l | = r $.
		Stage $l+1$ consists of $r$ substages, namely $(l+1,1), ({l+1},2), \ldots, ({l+1},r)$ where each substage addresses exactly one $(i,j) \in \mathcal{I}_l$.
		We start stage $({l+1},1)$ by considering any $(i,j) \in \mathcal{I}_l$. We reveal the neighbors of $\alpha \delta^{(l)} n$ clones among the
		$A_{i}^{j} > \delta^{(l)} n$ clones one by one. Here $0 < \alpha < 1$ is a constant that will describe shortly. The evolution of active clones in each of these $\alpha \delta^{(l)}n$ steps is identical to 
		that in the event $E_{i}^{j}$ in Case 1 of
		 the original exploration process. 
		 Fix any $(j,m) \in \mathcal{I}_{l+1}$. Note that $M_{ijjm} > 0$ by construction of  $\mathcal{T}_{i'}^{j'}$.
		 So by making $\epsilon$ and $\epsilon_{1}, \ldots, \epsilon_{l}$ smaller if necessary and choosing $\alpha$ small enough,
		  we can conclude using Lemma \ref{perturbation1} that for all time steps $k < \epsilon_l n + \alpha \delta^{(l)}n$ we have $|| M_k(n) - M || < \delta$ for any $\delta > 0$. Similarly, by
		   using Lemma \ref{perturbation}, we get
		  \begin{align}  \label{boundonactive}
		  	\sum_{\mathbf{d}} \left( -\frac{d_i N_j^{\mathbf{d}}(k)}{L_i^j(k) - \delta_{ij}}  + \frac{d_i p_j^{\mathbf{d}}}{\lambda_i^j}\right)  =   \frac{A_i^j(k) - \delta_{ij}}{L_i^j(k) - \delta_{ij}} \leq 
			\sum_{\mathbf{d}} \left|\frac{d_i N_j^{\mathbf{d}}(k)}{L_i^j(k) - \delta_{ij}}  - \frac{d_i p_j^{\mathbf{d}}}{\lambda_i^j} \right| < \delta.
		  \end{align}
		  By referring to the description of the exploration process for the event $E_i^j$ in $Case \ 1$,
		  the expected change in $Z_j^m(k+1)$ during stage $(l+1,1)$ can be computed similar to (\ref{expectedchange}) as
		  \begin{align*}
		  	\mathbf{E}[Z_j^m(k+1) | H(k)] &= \frac{A_j^i(k) - \delta_{ij}}{L_i^j(k) - \delta_{ij}} (- \delta_{im}) +  \sum_{\mathbf{d}} \frac{d_i N_j^{\mathbf{d}}(k)}{L_i^j(k) - \delta_{ij}} (d_m  - \delta_{im}) \\
			&= \left( M_k(n)\right)_{ijjm}  - \frac{A_j^i(k) - \delta_{ij}}{L_i^j(k) - \delta_{ij}} (- \delta_{im}) \\
			&\stackrel{(a)}{\geq} M_{ijjm} - 2 \delta \stackrel{(b)}{\geq} \delta,
		  \end{align*}
		  where $(a)$ follows from (\ref{boundonactive}) and $(b)$ can be guaranteed by choosing small enough $\delta$.
		The above argument can be repeated for each $(j,m) \in \mathcal{I}_{l+1}$.
		  We now have all the ingredients we need to repeat the 
		  one-sided Hoeffding inequality argument earlier in this section.
		   We can then conclude that there exists $\delta_{j}^m > 0$ such that w.h.p. we have at least $\delta_{j}^m n$ 
		  active clones of type $(j,m)$ by the end of stage $({l+1},1)$. By the same argument, this is also true for all children of $(i, j)$ in $\mathcal{T}_{{i'}}^{{j'}}$.
		 Before starting stage $S_{l+1}^2$, we set $\delta^{(l)} = \min\{ (1 - \alpha) \delta^{(l)}, \delta_{j_1}^m  \}$. This makes sure that at every substage of stage $l$ we have at least $\delta^{(l)} n$ clones
		 of each kind that has been considered before. This enables us to use the same argument for all substages of stage $l$. 
		  By continuing in this fashion, we can conclude that at the end of stage $l+1$ we have $\delta^{(l+1)} n $ clones of each type $(i,j)$ for each $(i,j) \in \bigcup_{x = 1}^{l+1} \mathcal{I}_x$ for 
		  appropriately defined $\delta^{(l+1)}$. 
		  The proof is now complete by induction.
	\end{proof}
	
	\begin{proof}[Proof of Corollary \ref{coverlinear}]
		Consider any $j \in [p]$. We will prove that the giant component has linearly many vertices in $G_j$ with high probability. 
		
		 Let $\mathbf{d}$ be such that $p_j^{\mathbf{d}} > 0$ and let $d_i > 0$ for some $i \in [p]$. This means in the configuration model, each of these type $\mathbf{d}$ vertices have at least one clones of type $(j,i)$.
		  Continue the exploration process as in Lemma \ref{coverall}. For small enough
		 $\epsilon_1$ there are at least $n(p_j^{\mathbf{d}} - \epsilon_1)$ of type $(j,i)$ clones still unused at time $\epsilon_1 n$. From Lemma \ref{coverall}, with high probability we have at least $\delta_1 n$ clones of 
		 type $(i,j)$ at this point. Proceed by simply revealing the neighbors of each of these. Form Lemma \ref{revelationBound},
		  it follows that with high probability, we will cover at least a constant fraction of these clones which 
		 correspond to a linear number of vertices covered. Each of these vertices are in the giant component and the proof is now complete.
	\end{proof}	
			
	We now prove part(b) of Theorem \ref{theorem1}. Part (a) will be proved in the next section. 
	We use the argument by Molloy and Reed, except for the multipartite case, we will need the help of Lemma \ref{coverall} to complete the argument.
		\begin{proof}[Proof of Theorem \ref{theorem1} $(b)$]
		Consider two vertices $u,v \in \mathcal{G}$. We will upper bound the probability that $u$ lies in the component $C$, which is the component being explored at time $\epsilon n$
		 and $v$ lies in a component of size bigger than $\beta \log n$ other than $C$.
		To do so start the exploration process at $u$ and proceed till the time step $\epsilon_1 n$ in the statement of Lemma \ref{coverall}. At this time we are in the midst of revealing the component $C$. 
		But this may not be the component of $u$ because we may have restarted the exploration process using the ``Initialization step" at some time between $0$ and $\epsilon_1 n$. If
		it is not the component of $u$, then $u$ does not lie in $C$. So, let us assume that indeed we are exploring the component of $u$. At this point continue the exploration process
		in a different way 
		by switching to revealing the component of $v$. For $v$ to lie in a component of size greater than 
		$\beta \log n$, the number of active clones in the exploration process associated with the component of $v$ 
		must remain positive for each of the first $\beta \log n$ steps. At each step choices of neighbors are made uniformly at random. Also, from Lemma \ref{coverall}, 
		$C$ has at least $\delta_1 n$ active clones of each type. For the
		component of $v$ to be distinct from the component of $u$ this choice must be different from any of these active clones of the component of $u$. So it follows that 
		the probability of this event is bounded above by
		$(1 -  \delta_1)^{\beta \log n}$. For large enough $\beta$, this gives
		\begin{align*}
			\mathbf{P}(C(u) = C,  \ C(v) \neq C, \  |C(v)| > \beta \log n) = o(n^{-2}).
		\end{align*}		
		Using a union bound over all pairs of vertices $u$ and $v$ completes the proof.
	\end{proof}
	
\end{section}

\begin{section}{Size of the Giant Component} \label{sec:size}
	In this section we complete the proof of Theorem \ref{theorem1}-$(a)$ regarding the size of the giant component.
	For the unipartite case, the first result regarding the size of the giant component was obtained by Molloy and Reed \cite{MolloyReed2} by using Wormald's results 
	\cite{Wormald95} on using differential equations for random processes.
	 As with previous results for the unipartite case, we show that the size of the giant component as a fraction of $n$ is concentrated around the survival probability of the
	edge-biased branching process. We do this in two steps. First we show that the probability that a certain vertex $v$ lies in the giant component is approximately equal to the probability that the edge-biased
	branching process with $v$ as its root grows to infinity. Linearity of expectation then shows that the expected fraction of vertices in the giant component is equal to this probability.
	 We then prove a concentration result around this
	expected value to complete the proof of Theorem \ref{theorem1}. These statements are proved formally in Lemma \ref{lem:coupling}. 
	
	Before we go into the details of the proof, we first prove a lemma which is a very widely used application of Azuma's inequality.
	\begin{lem} \label{azuma}
		Let $\mathbf{X} = (X_1, X_2, \ldots, X_t)$ be a vector valued random variable and let $f(\mathbf{X})$ be a function defined on $\mathbf{X}$. Let $\mathcal{F}_k \triangleq \sigma(X_1, \ldots, X_k)$.
		Assume that
		\begin{align*}
			| \mathbf{E}(f(\mathbf{X}) | \mathcal{F}_k) - \mathbf{E}(f(\mathbf{X}) | \mathcal{F}_{k+1}) | \leq c.
		\end{align*}
		almost surely.
		Then
		\begin{align*}
			\mathbf{P}(| f( \mathbf{X}) - \mathbf{E}[f(\mathbf{X})]   | > s) \leq 2 e^{-\frac{s^2}{2tc^2}}.
		\end{align*}
	\end{lem}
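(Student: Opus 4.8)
The plan is to recognize this as the standard Azuma--Hoeffding concentration inequality for a Doob martingale and to assemble it from its two classical ingredients: the martingale property of the successive conditional expectations, and Hoeffding's lemma for bounded mean-zero increments. First I would let $\mathcal{F}_0$ be the trivial $\sigma$-algebra and define, for $k = 0, 1, \ldots, t$, the random variables $Y_k \triangleq \mathbf{E}[f(\mathbf{X}) \mid \mathcal{F}_k]$. Since $\mathbf{X} = (X_1, \ldots, X_t)$ is $\mathcal{F}_t$-measurable we have $Y_t = f(\mathbf{X})$, while $Y_0 = \mathbf{E}[f(\mathbf{X})]$; by the tower property $(Y_k, \mathcal{F}_k)_{k=0}^{t}$ is a martingale. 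Writing $D_k \triangleq Y_k - Y_{k-1}$ for its increments, the hypothesis of the lemma is precisely $|D_k| \le c$ almost surely for every $k$, and $\mathbf{E}[D_k \mid \mathcal{F}_{k-1}] = 0$.

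Next I would establish the exponential moment bound. Fix $\lambda > 0$. Conditionally on $\mathcal{F}_{k-1}$, the variable $D_k$ has mean zero and takes values in an interval of length at most $2c$, so by Hoeffding's lemma (a one-line convexity estimate of exactly the flavor of the bound $e^x \le e^{B^2/2} + \frac{1}{2}\cdot\frac{e^B - e^{-B}}{2}\, x$ already used in Section~\ref{sec:existence}) we get $\mathbf{E}[e^{\lambda D_k} \mid \mathcal{F}_{k-1}] \le e^{\lambda^2 c^2 / 2}$. Iterating this estimate through the telescoping identity $Y_t - Y_0 = \sum_{k=1}^{t} D_k$, peeling off one increment at a time via the tower property, yields $\mathbf{E}[e^{\lambda (Y_t - Y_0)}] \le e^{\lambda^2 t c^2 / 2}$.

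Finally I would apply Markov's inequality, $\mathbf{P}(Y_t - Y_0 \ge s) \le e^{-\lambda s}\,\mathbf{E}[e^{\lambda(Y_t - Y_0)}] \le e^{-\lambda s + \lambda^2 t c^2 / 2}$, and optimize over $\lambda$ by taking $\lambda = s/(t c^2)$ to obtain $\mathbf{P}(Y_t - Y_0 \ge s) \le e^{-s^2/(2 t c^2)}$. Running the identical argument with $-f$ in place of $f$ controls the lower tail, and a union bound over the two tails produces the factor $2$ and the claimed inequality. There is no genuine obstacle here, as the statement is a textbook fact; the only things to be careful about are bookkeeping details: introducing the trivial $\sigma$-algebra $\mathcal{F}_0$ so that $Y_0$ is the unconditional mean, noting that integrability of $f(\mathbf{X})$ is automatic since the bounded increments force $|f(\mathbf{X}) - \mathbf{E}[f(\mathbf{X})]| \le tc$ almost surely, and matching the indexing in the hypothesis $|\mathbf{E}(f(\mathbf{X}) \mid \mathcal{F}_k) - \mathbf{E}(f(\mathbf{X}) \mid \mathcal{F}_{k+1})| \le c$ to the increment bound $|D_{k+1}| \le c$.
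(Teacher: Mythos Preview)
Your proposal is correct and follows exactly the paper's approach: define the Doob martingale $Y_k = \mathbf{E}[f(\mathbf{X})\mid \mathcal{F}_k]$, observe that the hypothesis gives bounded increments $|Y_{k+1}-Y_k|\le c$, and apply Azuma's inequality. The only difference is that the paper simply invokes Azuma's inequality as a black box, whereas you additionally spell out its proof via Hoeffding's lemma and the exponential Markov bound; this is extra detail rather than a different route.
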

	\begin{proof}
		The proof of this lemma is a standard martingale argument. We include it here for completeness. 
		Define the random variables $Y_0, \ldots,  Y_t$ as 
		\begin{align*}
			Y_k =  \mathbf{E}(f(\mathbf{X}) | \mathcal{F}_k).
		\end{align*}
		The sequence $\{Y_k\}$ is a martingale and $|Y_k - Y_{k+1}| \leq c$ almost surely. Also $Y_0 = f(\mathbf{X})$ and $Y_t = \mathbf{E}[f(\mathbf{X})]$.
		The lemma then follows by applying Azuma's inequality to the martingale sequence $\{Y_k\}$. 
	\end{proof}

	\begin{lem} \label{lem:coupling}
		Let $\epsilon > 0$ be given. Let $v \in \mathcal{G}$ be chosen uniformly at random. Then for large enough $n$, we have 
		\begin{align*}
			|\mathbf{P}(v \in C)  - \mathbf{P}(|\mathcal{T}| =  \infty)| \leq \epsilon.
		\end{align*}
	\end{lem}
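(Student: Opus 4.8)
The plan is to couple the exploration process started at $v$ with the edge-biased branching process $\mathcal{T}$ and to exploit this coupling in two complementary regimes. Write $\eta=\mathbf{P}(|\mathcal{T}|=\infty)$ and let $C(v)$ denote the component of $v$. It is convenient to explore $\mathcal{T}$ by a breadth-first queue process $\tilde{A}(k)$, where $\tilde{A}(0)$ is the number of children of the root and $\tilde{A}(k+1)=\tilde{A}(k)-1+(\text{number of children of the }k\text{-th explored vertex})$, so that $|\mathcal{T}|=\infty$ if and only if $\tilde{A}(k)>0$ for all $k$.

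\emph{Finite-time coupling.} First I would fix a constant $T$ and couple the exploration from $v$ with $\tilde{A}$ so that $A(k)=\tilde{A}(k)$ for every $k\le T$, on an event of probability $1-o_n(1)$. The initial offspring profile of the exploration is governed by the type of $v$, whose law $p_i^{\mathbf{d}}(n)$ is within total variation $o_n(1)$ of the root law $p_i^{\mathbf{d}}$ of $\mathcal{T}$ by Assumption \ref{assume}(b). For the remaining steps there are two further sources of error over $T=O(1)$ steps: a revealed neighbour may be an active clone (a ``back-edge''), which at step $k\le T$ happens with probability $A_j^i(k)/(L_j^i(k)-\delta_{ij})=O(T\omega(n)/n)$ because at most $O(T\omega(n))$ clones have been touched while the relevant living-clone pool has size $n\lambda_j^i(n)-O(T)=\Theta(n)$ by Assumption \ref{assume}(d), so a union bound gives total back-edge probability $O(T^{2}\omega(n)/n)=o(1)$ by Lemma \ref{maxdegree}; and the type of a freshly revealed sleeping vertex at step $k\le T$ is drawn according to $d_iN_j^{\mathbf{d}}(k)/(L_j^i(k)-\delta_{ij})$ with $N_j^{\mathbf{d}}(k)=n_j^{\mathbf{d}}(n)-O(T)$ and $L_j^i(k)=n\lambda_j^i(n)-O(T)$, which by Assumption \ref{assume}(b),(c) is within total variation $o_n(1)$ of the edge-biased offspring law of $\mathcal{T}$, so matching these over $T$ steps fails with probability $o_n(1)$. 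Consequently, for any fixed $T$ and $K\ge 1$ one gets $\mathbf{P}(A(k)>0\text{ for all }k\le T)=\mathbf{P}(|\mathcal{T}|>T)+o_n(1)$ and $\mathbf{P}(A(T)\ge K\text{ and no restart in }[0,T])=\mathbf{P}(\tilde{A}(T)\ge K)+o_n(1)$.

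\emph{Upper bound.} By Section \ref{sec:existence} together with Theorem \ref{theorem1}(b), w.h.p.\ $\mathcal{G}$ has a unique linear-sized component $C$, so $|C|\ge c_0 n$ w.h.p.\ for some constant $c_0>0$. On $\{v\in C\}\cap\{|C|\ge c_0 n\}$ the exploration from $v$ reveals $C(v)=C$ in its first excursion and hence cannot restart during the first $T$ steps once $c_0 n>T$; therefore $\mathbf{P}(v\in C)\le\mathbf{P}(A(k)>0\text{ for all }k\le T)+o(1)=\mathbf{P}(|\mathcal{T}|>T)+o_n(1)$. Since $\mathbf{P}(|\mathcal{T}|>T)\downarrow\eta$ as $T\to\infty$, picking $T$ with $\mathbf{P}(|\mathcal{T}|>T)\le\eta+\epsilon/2$ and then $n$ large gives $\mathbf{P}(v\in C)\le\eta+\epsilon$.

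\emph{Lower bound, and the main obstacle.} Fix a large constant $K$. Since on survival $\tilde{A}(T)\to\infty$, dominated convergence gives $\mathbf{P}(\tilde{A}(T)\ge K)\to\eta$ as $T\to\infty$, so I may fix $T=T(K)$ with $\mathbf{P}(\tilde{A}(T)\ge K)\ge\eta-\epsilon/4$; by the finite-time coupling the event $\mathcal{E}_K=\{A(T)\ge K\text{ and no restart in }[0,T]\}$ then has probability at least $\eta-\epsilon/4+o_n(1)$. Conditioned on $\mathcal{E}_K$, I would continue the exploration past time $T$ and couple it, exactly as in Section \ref{sec:existence}, with the underestimating process $B_i^j(\cdot)$ restarted at time $T$ from $B_i^j(T)=A_i^j(T)$, so that $\sum_{(i,j)\in S}B_i^j(T)=A(T)\ge K$; the crucial point is that, since (as in the proof of Lemma \ref{valid}) the $\pi_{ji}^{\mathbf{d}}$ are supported on $\{\mathbf{1}'\mathbf{d}\le q\}$, the increments of $B$, and hence of the Lyapunov function $W(k)=\sum_{(i,j)\in S}z_i^jB_i^j(k)$, are bounded by an absolute constant, while $\mathbf{E}[\Delta W(k+1)\mid H(k)]\ge\kappa>0$ as in (\ref{positiveexpectation}). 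A standard optional-stopping / supermartingale argument for $W$ then produces a constant $\rho=\rho(\kappa)\in(0,1)$, independent of $n$, such that $\mathbf{P}(\text{the exploration restarts during }[T,\epsilon n]\mid\mathcal{E}_K)\le\rho^{K}+o_n(1)$; on the complementary event the exploration never leaves $C(v)$, and the Hoeffding computation behind (\ref{linearw}) applies verbatim after time $T$, so w.h.p.\ $W(\epsilon n)\ge\kappa\epsilon n/2$ and hence $A_{i'}^{j'}(\epsilon n)>\mu n$ for some pair $(i',j')$, all inside $C(v)$. Lemma \ref{coverall} and Corollary \ref{coverlinear} then give $|C(v)|=\Omega(n)$, so $C(v)$ is a linear-sized component and hence $C(v)=C$ by Theorem \ref{theorem1}(b), i.e.\ $v\in C$. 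Choosing $K$ with $\rho^{K}\le\epsilon/2$ yields $\mathbf{P}(v\in C)\ge\mathbf{P}(\mathcal{E}_K)-\rho^{K}-o_n(1)\ge\eta-\epsilon$ for $n$ large, which combined with the upper bound proves the lemma. I expect the main obstacle to be exactly this last ``ignition'' step: restarting the coupling with $B$ at a constant time while keeping the probability of an early restart bounded uniformly in $n$---for which the boundedness of the increments of $B$ is essential---and then invoking the uniqueness of the linear-sized component (Theorem \ref{theorem1}(b)) to identify $C(v)$ with $C$.
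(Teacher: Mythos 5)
Your proof is correct in outline but takes a genuinely different and considerably longer route than the paper's. You couple the exploration with $\mathcal{T}$ only for a fixed constant time $T$, and then bridge from a constant to a linear number of active clones via an ``ignition'' argument: restart the $B$-coupling at time $T$ and use an exponential-supermartingale/optional-stopping bound to show that, conditional on $A(T)\ge K$, the restart probability is at most $\rho^{K}$. That idea is sound, and your observation that the $B$-increments are in fact $O(q)$ rather than merely $O(\omega(n))$ (because the $\pi_{ji}^{\mathbf d}$ from Lemma \ref{valid} are supported on $\{\mathbf 1'\mathbf d\le q\}$) is both correct and sharper than what the paper states; it is exactly what makes the $\rho^{K}$ estimate work. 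But the ignition stage is nontrivial extra machinery that you only sketch. The paper sidesteps it entirely by pushing the branching-process coupling to $\beta\log n$ steps: since $\omega(n)=o(\sqrt n)$ (Lemma \ref{maxdegree}), the per-step total-variation drift is $O(\omega(n)\log n/n)$, so the coupling succeeds over the whole window with probability $1-O(\omega(n)(\log n)^2/n)=1-o(1)$; and $\beta\log n$ is precisely the threshold at which Theorem \ref{theorem1}(b) already identifies $C(v)$ with the unique giant component, giving $\mathbf P(v\in C)=\mathbf P(|C(v)|>\beta\log n)+o(1)=\mathbf P(|\mathcal T_n|>\beta\log n)+o(1)$ directly. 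In short, your approach buys a constant-time coupling (and a nice quantitative observation about bounded $B$-increments), but pays for it with a separate ignition argument; the paper's approach couples just long enough to land in the regime where Theorem \ref{theorem1}(b) does all the remaining work.
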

	\begin{proof}
		We use a coupling argument similar to that used by Bollobas and Riordan \cite{BollobasRiordan12} where it was used to prove a similar result for ``local" properties of random graphs. 
		We couple the exploration process starting at $v$ with the branching process $\mathcal{T}_n(v)$ by trying to replicate the event in the branching 
		process as closely as often as possible. We describe the details below.
		
		The parameters of the distribution associated with $\mathcal{T}_n$ is given by $\frac{d_i p_j^{\mathbf{d}}(n)}{\lambda_i^j(n)}$. In the exploration process, at time step $k$ the corresponding parameters 
		are given by $\frac{d_i N_j^{\mathbf{d}}(k)}{L_j^i(k) - \delta_{ij}}$ (see Section \ref{sec:exploration}).
		We first show that for each of the first $\beta \log n$ steps of the exploration process, these two quantities are close to each other.
		The quantity $d_i N_j^{\mathbf{d}}(k)$ is the total number of sleeping clones at time $k$ of type $(j,i)$ in $G_j$ that belong to a vertex of type $\mathbf{d}$.
		At each step of the exploration process the total number of sleeping clones can change by at most $\omega(n)$. 
		Also $L_i^j(k)$ is the total number of living clones of type $(j,i)$ in $G_j$ and can change by at most two in each step.
		
		Then initially for all $(i,j)$ we have $L_i^j(0) = \Theta(n)$ and
		until time $\beta \log n$ it remains $\Theta(n)$.
		Therefore,
		\begin{align*}
			\sum_{i,j, \mathbf{d}}  \left|  \frac{d_i N_j^{\mathbf{d}}(k+1)}{L_j^i(k+1) - \delta_{ij}}  - \frac{d_i N_j^{\mathbf{d}}(k)}{L_j^i(k) - \delta_{ij}}    \right|
			&\leq \sum_{i,j , \mathbf{d}}  \left| \frac {d_i N_j^{\mathbf{d}}(k+1) -d_i N_j^{\mathbf{d}}(k)}  {L_j^i(k) - \delta_{ij}}  \right|  \\ 
			&+ \left| \frac{d_i N_j^{\mathbf{d}}(k+1)}{L_j^i(k) - \delta_{ij}}  - \frac{d_i N_j^{\mathbf{d}}(k+1)}{L_j^i(k+1) - \delta_{ij}}\right|. 
		\end{align*}
		From the explanation above, the first term is $O(\omega(n) /n)$ and the second term is $O(1/n)$. Recall that 
		$ \frac{d_i N_j^{\mathbf{d}}(0)}{L_j^i(0)}  = \frac{d_i p_j^{\mathbf{d}}(n)}{\lambda_j^i(n)}$. From this we can conclude by using a telescopic sum and triangle inequality that for time index $k \leq \beta \log n$, 
		\begin{align*}
			\sum_{i,j ,\mathbf{d}} & \left|  \frac{d_i N_j^{\mathbf{d}}(k)}{L_j^i(k) - \delta_{ij}}  - \frac{d_i p_j^{\mathbf{d}}(n)}{\lambda_i^j(n)} \right|  = O(k \omega(n) /n) = O(\omega(n) \log n/n).
		\end{align*}
		So the total variational distance between the distribution of the exploration process and the branching process at each of the first $\beta \log n$ steps is $O(\omega(n) \log n/n)$.
		We now describe the coupling between the branching process and the exploration process.
		For the first time step, note that
		the root of $\mathcal{T}_n$ has type $(i,\mathbf{d})$ with probability $p_i^{\mathbf{d}}$. We can couple this with the exploration process by
		letting the vertex awakened in the  ``Initialization step" of the exploration process to be of type $(i, \mathbf{d})$. 
		Since the two probabilities are the same, this step of the coupling succeeds with probability one.
		Suppose that we have defined the coupling until time $k < \beta \log n$. To describe the coupling at time step $k+1$ we need to consider the case of two events. The first is the event when the coupling has 
		succeeded until time $k$, i.e., the two processes are identical. In this case, since the total variational distance between the parameters of the two processes is $O(\omega(n) \log n/n)$ we perform a maximal 
		coupling, i.e., a coupling which fails with probability equal to the total variational distance. For our purposes, we do not need to describe the coupling at time $k+1$ in the event that the coupling has
		failed at some previous time step.
		The probability that the coupling succeeds at each  of the first $\beta \log n$ steps is at least 
		$\left( 1 - O(\omega(n)  \log n/n)\right)^{\beta \log n} = 1 - O(\omega(n) (\log n)^2/n) = 1 - o(1)$.
		We have shown that the coupling succeeds till time $\beta \log n$ with high probability. Assume that it indeed succeeds. In that case the component explored thus far is a tree.
		Therefore, at every step of the exploration 
		process a sleeping vertex is awakened because otherwise landing on an active clone will result in a cycle. This means if the branching process has survived up until this point, the corresponding exploration 
		process has also survived until this time and the component revealed has at least $\beta \log n$ vertices. 
		Hence,
		\begin{align*}
			\mathbf{P}(|C(v)| > \beta \log n) = \mathbf{P}(|\mathcal{T}_n| > \beta \log n) + o(1).
		\end{align*}		
		But Theorem \ref{theorem1} $(b)$ states that with high probability, there is only one component of size greater than $\beta \log n$, which is the giant component, i.e.,  
		\begin{align*}
			\mathbf{P}(v \in C) =  \mathbf{P}(|C(v)| > \beta \log n) + o(1) =  \mathbf{P}(|\mathcal{T}_n| > \beta \log n) + o(1).
		\end{align*}
		So, for large enough $n$, we have $|\mathbf{P}(v \in C) - \mathbf{P}(|\mathcal{T}_n| > \beta \log n)| \leq \epsilon /2$.
		The survival probability of the branching process $\mathcal{T}$ is given by
		\begin{align*}
			\mathbf{P}(|\mathcal{T}| = \infty) = 1 - \sum_{i=1}^{\infty} \mathbf{P}(|\mathcal{T}| = i).
		\end{align*}
		Choose $K$ large enough such that $|\mathbf{P}(|\mathcal{T}| \geq K) - \mathbf{P}(|\mathcal{T}| = \infty)| \leq \epsilon/4$.
		Also, since $\frac{d_i p_j^\mathbf{d}(n)}{\lambda_i^j(n)} \rightarrow \frac{d_i p_j^\mathbf{d}} {\lambda_i^j} $ for all $i,j, \mathbf{d}$, from the theory of branching processes, 
		for large enough $n$,
		\begin{align*}
			|\mathbf{P}(|\mathcal{T}_n| \geq K)  - \mathbf{P}(|\mathcal{T}| \geq K)| &\leq \epsilon/4, \\
			|\mathbf{P}(|\mathcal{T}_n| = \infty)  - \mathbf{P}(|\mathcal{T}| = \infty)| &\leq \epsilon/2.
		\end{align*}
		Since for large enougn $n$, we have $\mathbf{P}(|\mathcal{T}_n| = \infty) \leq \mathbf{P}(|\mathcal{T}_n| > \beta \log n) \leq \mathbf{P}(|\mathcal{T}_n| \geq K)$, the proof follows by combining the above statements.
	\end{proof}
	
	Now what is left is to show that the size of the giant component concentrates around its expected value.
	
	\begin{proof}[Proof of Theorem \ref{theorem1} (a) - (size of the giant component)]
	From the first two parts of Theorem \ref{theorem1}, with high probability we can categorize all the vertices of $\mathcal{G}$ into two parts, those which lie in the giant component, and those which lie in a component of 
	size smaller than $\beta \log n$, i.e., in small components. The expected value of the fraction of vertices in small components is $1 - \eta$ + o(1). We will now show that the fraction of vertices in small components
	concentrates around this mean.
	
	Recall that $cn \triangleq n \sum_{i \in [p], \mathbf{d} \in D} \mathbf{1}' \mathbf{d} \ p_i^{\mathbf{d}}$ is the number of edges in the configuration model. 
	Let us consider the random process where the edges of the configuration model are revealed one by one. Each edge corresponds to a matching between clones.
	Let $E_i \ 1\leq i \leq cn$ denote the (random) edges. Let $N_S$ denote the number of vertices in small components, i.e., in components of size smaller than $\beta \log n$. 
	We wish to apply Lemma \ref{azuma} to obtain the desired concentration result for which we need to bound $| \mathbf{E}[ N_S | E_1, \ldots, E_k] - \mathbf{E}[ N_S | E_1, \ldots, E_{k+1}] |$. 
	In the term $\mathbf{E}[ N_S | E_1, \ldots, E_{k+1}]$, let $E_{k+1}$ be the edge $(x,y)$. The expectation is taken over all possible outcomes of the rest of the edges with $E_{k+1}$ fixed to be the edge $(x,y)$.
	 In the first term $ \mathbf{E}[ N_S | E_1, \ldots, E_k]$, after $E_1, \ldots, E_k$ are revealed, 
	the expectation is taken over the rest of of the edges, which are chosen uniformly at random among all possible
	edges. All outcomes are equally likely. We construct a mapping from each possible outcome to an outcome that has $E_{k+1} = (x,y)$.
	 In particular, if the outcome contains the edge $(x,y)$ we can map it to the corresponding outcome with $E_{k+1} = (x,y)$ by simply cross-switching the positions of $(x,y)$ with the edge that occured at $k+1$. This
	 does not change the value of $N_S$ because it does not depend on the order in which the matching is revealed. On the other hand, if the outcome does not contain $(x,y)$, then we map it to 
	one of the outcomes with $E_{k+1} = (x,y)$ by switching the two edges connected to the vertices $x$ and $y$. We claim that
	switching two edges in the configuration model can change $N_S$ by at most $4 \beta \log n$. To see why observe that we can split the process of cross-switching two edges into four steps. In the first two steps we 
	delete each of the two edges one by one and in the next two steps we put them back one by one in the switched position.
	Deleting an edge can increase $N_S$ by at most $2 \beta \log n$ and can never reduce $N_S$. Adding an edge can decrease $N_S$ by at most $2 \beta \log n$ and can never increase $N_S$. So cross-switching 
	can either increase or decrease $N_S$ by at most $4 \beta \log n$. Using this we conclude
	\begin{align*}
		| \mathbf{E}[ N_S | E_1, \ldots, E_k] - \mathbf{E}[ N_S | E_1, \ldots, E_{k+1}] | \leq 4 \beta \log n.
	\end{align*}
	We now apply Lemma \ref{azuma} to obtain.
	\begin{align*}
		\mathbf{P}\left( \frac{1}{n}(N_S - (1 - \eta)) > \delta \right) < e^{-\frac{n^2 \delta^2}{8 n \beta \log n}} = o(1).
	\end{align*}
	Since with high probability, the number of vertices in the giant component is $n - N_S $, the above concentration result completes the proof.
	\end{proof}
\end{section}

\begin{section}{Subcritical Case} \label{sec:subcritical}
	In this section we prove Theorem \ref{theorem2}. The idea of the proof is quite similar to that of the supercritical case. The strategy of the proof is similar to that used in \cite{MolloyReed1}.
	More specifically, we consider the event $E_v$ 
	that a fixed vertex $v$ lies in a component of size greater than $\zeta \omega(n)^2\log n$ for some $\zeta > 0$.
	 We will show that $\mathbf{P}(E_v) =  o(n^{-1})$. Theorem \ref{theorem2} then follows by taking a union bound over 
	$v \in \mathcal{G}$.
	
	 Assume that we start the exploration process at the vertex $v$. For $v$ to lie in a component of size greater
	than $\zeta \omega(n)^2\log n$
	 the exploration process must remain positive for at least $\zeta \omega(n)^2\log n$ time steps, at each step of the exploration process, at most one vertex is new vertex is added to the component being revealed.
	This means at time $\zeta \omega(n)^2\log n$ we must have $A \left(  \zeta \omega(n)^2 \log n \right) > 0$,
	where recall that $A(k)$ denotes the total number of active clones at time $k$ of the exploration process.
	
	Let $H(k) = \sigma(  \{A_i^j(r), \quad (i,j) \in S, \ 1\leq r \leq k\} )$ denote the filtration of the exploration process till time $k$. We will assume that $A(k) > 0$ for $0 < k \leq \zeta \omega(n)^2\log n$ and upper bound 
	$\mathbf{P}(A(\zeta \omega(n)^2\log n) > 0)$.
	We first compute the expected conditional change in the number of active clones at time $k$ for $0 \leq k \leq \zeta \omega(n)^2\log n$ by splitting the outcomes into the several possible cases that affects 
	$\hat{Z}_i^j(k+1)$ as in (\ref{expectedchange}). 
	\begin{align*}
		\mathbf{E}[Z_i^j(k+1) | H(k)]  &= \mathbf{P}(E_i^j | H(k)) \ \mathbf{E}[{Z}(k+1) | H(k), E_i^j] \\
			&+ \sum_{m,\mathbf{d}} \mathbf{P}(E_m^i \cap E_a | H(k) )     	\ \mathbf{E}[{Z}(k+1) | H(k), E_m^i \cap E_a ] \\
			&+ \mathbf{P}(E_m^i \cap E_s^{\mathbf{d}}   | H(k)) 	\ \mathbf{E}[{Z}(k+1) | H(k), E_m^i \cap E_s^{\mathbf{d}}]
	\end{align*}
	\begin{align*}		
			&= \frac{A_i^j(k)}{A(k)} (-1) \ + \  \sum_m \frac{A_m^i(k)}{A(k)} \frac{A_i^m(k)}{L_i^m(k)} (-\delta_{mj}) \\		
			&+ \sum_{m, \mathbf{d}} \frac{A_m^i(k)}{A(k)} \frac{d_m N_i^{\mathbf{d}}(k)}{L_i^m(k)} (d_j - \delta_{jm}) \\
			&= - \frac{A_i^j(k)}{A(k)} -  \frac{A_j^i(k)}{A(k)} \frac{A_i^j(k)}{L_i^j(k)} + \sum_{m} \frac{A_m^i(k)}{A(k)} \sum_{\mathbf{d}} \frac{d_m N_i^{\mathbf{d}}(k)}{L_i^m(k)} (d_j - \delta_{jm}).
	\end{align*}
	
	We proceed with the proof in a similar fashion to the proof of the supercritical case.
	Let $\mathbf{E}[\mathbf{\hat{Z}}(k+1)|H(k)] = (\mathbf{E}[ Z_i^j(k+1) | H(k)] , \ (i,j) \in S)$ and define the vector quantity $\mathbf{A}(k) = \left( \frac{A_i^j(k)}{A(k)} , \  (i,j) \in S \right)$.
	Also define the matrix $Q(k) \in \mathbbm{R}^{N \times N}$ where rows and columns are indexed by double indices and for each $(i,j) \in S$, and
	\begin{align*}
		Q_{ijji}(k) &= -\frac{A_i^j(k)}{L_i^j(k) - \delta_{ij}}  , \\
		Q_{ijlm}(k) &= 0 \ \mbox{ for } (l,m) \neq (j,i).
	\end{align*}
	Then the expected change in the number of active clones of various types can be compactly written as
	\begin{align*}
		\mathbf{E}[\mathbf{\hat{Z}}(k+1)|H(k)]  = \left( M(k) - I + Q(k) \right) \mathbf{A}(k).
	\end{align*}
	
	As the exploration process proceeds, the matrix $M(k)$ changes over time. However for large enough $n$, it follows from Lemma \ref{perturbation1} that the difference between $M(k)$ and $M$ is small for 
	$0 \leq k \leq \frac{1}{2} \zeta \omega(n)^2\log n$. In particular given any $\epsilon > 0$, for large enough $n$, we have $||M(k) - M|| < \epsilon$. Also from 
	Lemma \ref{perturbation} we also have $||Q(k)|| < \epsilon$. Let $\mathbf{z}$ be the Perron-Frobenius eigenvector of $M$.
	By the assumption in Theorem \ref{theorem2}, we have
	\begin{align*}
		\mathbf{z}' M = (1 - \delta) \mathbf{z}',
	\end{align*}
	for some $0 < \delta <1$, where $(1-\delta) = \gamma$ is the Perron-Frobenius eigenvalue of $M$. Also let $z_m \triangleq \min_i z_i$ and $z_M \triangleq \max_i z_i$.
	Define the random process 
	\begin{align*}
		W(k) \triangleq  \sum_i z_i A_i (k) 
	\end{align*}
	Then the expected conditional change in $W(k)$ is given by
	\begin{align*}
		\mathbf{E}(\Delta W(k+1) | H(k)) &= \mathbf{z}' \mathbf{E \hat{Z}}(k+1) \\
		&=	\mathbf{z}'  \left( M(k) - I + Q(k) \right) \mathbf{A}(k) \\
		&= \mathbf{z}'  (M - I ) \mathbf{A}(k) + \mathbf{z}'  (M(k) - M + Q(k)) \mathbf{A}(k) \\
		&= (- \delta) \mathbf{z}' \mathbf{A}(k) + \mathbf{z}'  (M(k) - M + Q(k)) \mathbf{A}(k).
	\end{align*}
	We can choose $\epsilon$ small enough such that $\mathbf{z}'  (M(k) - M + Q(k)) < \frac{1}{2} \delta \mathbf{z}'$, where the inequality refers to element wise inequality.
	Thus 
	\begin{align*}
		\mathbf{E}(\Delta W(k) | H(k)) <  -\frac{1}{2} \delta z' \mathbf{A}(k) < - \frac{1}{2}\delta z_m \triangleq \kappa.
	\end{align*}
	We can now repeat the one-sided Hoeffding bound argument following equation (\ref{positiveexpectation}) in the supercritical case and obtain the following inequality:
	\begin{align*}
		\mathbf{P}( |{W}(\alpha) + \kappa \alpha)| > \delta) \leq 2 e^{- \frac{\delta^2}{2 \alpha \omega^2(n)}}.
	\end{align*}
	Setting $\alpha = \zeta \omega^2(n) \log n$ and $\delta = \frac{1}{2} \kappa \alpha$, we get
	\begin{align*}
		\mathbf{P} ({W}(\zeta \omega^2(n) \log n) > 0) \leq 2 e^{- \frac {\kappa^2 \zeta \log n} { 8}} = o(n^{-1}),
	\end{align*}
	for large enough $\zeta$.
	We conclude
	\begin{align*}
		\mathbf{P} (\mathcal{G} \mbox{ has a component bigger than } \zeta \omega^2(n)  \log n\ ) < \sum_{v \in \mathcal{G}} \mathbf{P} (C(v) > \zeta \log n) = o(1).
	\end{align*}
	This completes the proof of the theorem.
 \end{section}

\bibliographystyle{amsalpha}
\bibliography{Reference_GC}

\end{document}